\documentclass[11pt]{article}
\PassOptionsToPackage{obeyspaces}{url}

\usepackage[T1]{fontenc}
\usepackage{lmodern}
\usepackage{amsmath,amssymb,amsthm,mathtools}
\usepackage{enumitem,microtype,etoolbox}
\usepackage{tikz}
\usepackage{aliascnt}
\usepackage[hidelinks]{hyperref}
\usepackage{bookmark}
\bookmarksetup{open,numbered,depth=5}
\hypersetup{
  pdftitle={Independent Sets and Balanced Cycle-Linkings in 2-Connected Graphs},
  pdfauthor={Shuaichao Wang}
}

\allowdisplaybreaks[4]

\newtheorem{theorem}{Theorem}[section]

\newaliascnt{lemma}{theorem}
\newtheorem{lemma}[lemma]{Lemma}
\aliascntresetthe{lemma}

\newaliascnt{proposition}{theorem}
\newtheorem{proposition}[proposition]{Proposition}
\aliascntresetthe{proposition}

\newaliascnt{corollary}{theorem}
\newtheorem{corollary}[corollary]{Corollary}
\aliascntresetthe{corollary}

\theoremstyle{definition}

\newaliascnt{remark}{theorem}
\newtheorem{remark}[remark]{Remark}
\aliascntresetthe{remark}

\newaliascnt{problem}{theorem}
\newtheorem{problem}[problem]{Problem}
\aliascntresetthe{problem}

\usepackage[capitalise,nameinlink]{cleveref}

\newcommand{\Turan}{\operatorname{Tur}}
\newcommand{\cM}{\mathcal{M}}
\newcommand{\floor}[1]{\left\lfloor #1\right\rfloor}
\newcommand{\ceil}[1]{\left\lceil #1\right\rceil}
\newcommand{\turcl}[3]{\mathsf{T}_{#1,#2}(#3)}

\newcommand{\preceqcf}{\preceq_{\mathrm{cf}}}
\newcommand{\doi}[1]{\href{https://doi.org/#1}{doi:#1}}

\patchcmd{\thebibliography}
  {\leftmargin\labelwidth}
  {\leftmargin\labelwidth\addtolength\itemsep{-0.3\baselineskip}}{}{}

\title{Independent Sets and Balanced Cycle-Linkings in 2-Connected Graphs}

\author{Shuaichao Wang\thanks{Faculty of Mathematics and Statistics,
Central China Normal University, Wuhan 430079, PR China.
\texttt{shuaichao@ccnu.edu.cn}.}}

\date{}

\begin{document}

% Preserve the reference order of the supplied manuscript.
\nocite{AlonShikhelman2016,Zykov1949,BougardJoret2008,DeLucaZamboni2021,FranklFurediKalai1988,Frohmader2008,LehnerWagner2017}

\maketitle

\begin{abstract}
For every $\alpha\ge3$ and all sufficiently large $n$, we identify a
single graph that simultaneously maximizes the number of independent
sets of every size among all $n$-vertex $2$-connected graphs with
independence number $\alpha$. This graph is unique up to isomorphism
and is a balanced cycle-linking of the disjoint union of $\alpha$
cliques whose orders differ by at most one. More precisely, for each
$3\le\beta\le\alpha$, the graphs maximizing the number of independent
$\beta$-sets are exactly the cycle-linkings whose clique-size cyclic
words are $\lfloor\beta/2\rfloor$-balanced. These results extend the
corresponding extremal result for connected graphs to the
$2$-connected setting. The proof combines generalized Tur\'an-type
clique counting and the edge-extremal theory of $2$-connected graphs
with a coefficientwise balancing-switch argument. The switch also
shows that a shortest imbalance of length $r$ first affects the
independent-set count in degree $2r$.
\end{abstract}

\section{Introduction}
The classical Tur\'an problem asks for the maximum number of edges
in an $n$-vertex graph containing no copy of a prescribed graph $F$;
see \cite{Bollobas1978} for background. For complete forbidden
graphs, the exact extremal number was determined by Tur\'an
\cite{Turan1941}. Its generalized form asks for the maximum number
$\operatorname{ex}(n,H,F)$ of copies of a fixed graph $H$ in an
$n$-vertex $F$-free graph. The systematic study of this parameter
was initiated by Alon and Shikhelman \cite{AlonShikhelman2016}.
The clique-counting case goes back to Zykov \cite{Zykov1949}:
for $2\le\beta\le\alpha$, the balanced complete $\alpha$-partite
graph maximizes the number of copies of $K_\beta$ among all
$n$-vertex $K_{\alpha+1}$-free graphs.

All graphs in this paper are finite and simple. For a graph $G$,
write $e(G)$ for its number of edges, $\alpha(G)$ for its independence
number, and $i_j(G)$ for its number of independent sets of
cardinality $j$. Passing to complements gives the corresponding
independent-set problem: if $H=\overline G$, then
$i_j(G)=k_j(H)$ and $\alpha(G)=\omega(H)$, where $k_j(H)$ counts
copies of $K_j$ and $\omega(H)$ is the clique number of $H$.
Following Bougard--Joret and Lehner--Wagner, let $T(n,\alpha)$
denote the disjoint union of $\alpha$ cliques whose orders differ
by at most one. Zykov's theorem therefore implies that, without a
connectivity requirement, $T(n,\alpha)$ simultaneously maximizes
every $i_j$ among graphs of order $n$ and independence number
$\alpha$.

For $1\le\alpha<n$, the corresponding connected construction is the
\emph{Tur\'an connected graph} $TC_{n,\alpha}$. It is obtained from
$T(n,\alpha)$ by choosing a vertex in a largest clique and joining
it to one vertex in every other clique. Bruy\`ere and M\'elot
\cite{BruyereMelot2009} proved that $TC_{n,\alpha}$ maximizes the
total number of independent sets among connected graphs of order
$n$ and independence number $\alpha$. Lehner and Wagner
\cite[Theorem~2]{LehnerWagner2017} strengthened this result by
showing that the same graph simultaneously maximizes $i_j$ for
every $1\le j\le\alpha$. For maximum independent sets, Mohr and
Rautenbach \cite{MohrRautenbach2021} characterized all extremal
connected graphs with prescribed order and independence number.

We seek a $2$-connected analogue of the fixed-size result.
Equivalently, we consider clique counting in $K_{\alpha+1}$-free
graphs with clique number $\alpha$ whose complements are
$2$-connected. For $j=2$, the identity
$i_2(G)=\binom n2-e(G)$ turns the problem into minimizing the
number of edges. Bougard and Joret determined this minimum and
all equality cases \cite[Proposition~6]{BougardJoret2008}.
For $\alpha\ge3$ and $n\ge3\alpha$, the extremal graphs are
precisely the \emph{cycle-linkings} of $T(n,\alpha)$: arrange the
clique components cyclically and join each pair of consecutive
cliques by one edge, using distinct attachment vertices for the
two linking edges incident with each clique.

The minimum-edge classification does not, however, determine the
higher independent-set counts. Although all these cycle-linkings
have the same number of edges, the cyclic order of the clique sizes
can affect $i_j$ when $j\ge4$. Thus two tasks remain: to show that
higher independent-set counts are maximized within the minimum-edge
family, and to determine the optimal cyclic orders within that
family.

The clique orders in $T(n,\alpha)$ are already as equal as possible;
the remaining choice is how to distribute the larger cliques around
the cycle. Write $n=q\alpha+\rho$, where $0\le\rho<\alpha$.
Encode the $\rho$ cliques of order $q+1$ by $1$ and the remaining
cliques by $0$; when $\rho=0$, use the all-zero word.
A cyclic word is considered up to rotation, and its cyclic factors
are read in its periodic extension. It is \emph{balanced} if any
two cyclic factors of the same length contain numbers of $1$'s
differing by at most one. For background on balanced and mechanical
words, see \cite[Chapter~2]{Lothaire2002}.

For each $0\le\rho<\alpha$, a balanced word of length $\alpha$
with $\rho$ ones is given by the $\alpha$-periodic sequence
\begin{equation}\label{eq:mechanical-word}
 b_i=\left\lfloor\frac{(i+1)\rho}{\alpha}\right\rfloor
     -\left\lfloor\frac{i\rho}{\alpha}\right\rfloor,
 \qquad i\in\mathbb Z.
\end{equation}
Each period contains $\rho$ ones, and every length-$s$ factor
contains either $\lfloor s\rho/\alpha\rfloor$ or
$\lceil s\rho/\alpha\rceil$ ones. The balanced cyclic word with
these prescribed numbers is unique up to rotation; see
\cite[Proposition~4.4 and the preceding discussion]{DeLucaZamboni2021}.
The constant-word case is immediate. For $\alpha\ge3$ and
$n\ge2\alpha$, let $B(n,\alpha)$ denote the corresponding balanced
cycle-linking of $T(n,\alpha)$, which is well defined up to
isomorphism.

The \emph{independence polynomial} of $G$ is
\[
 I(G;x):=\sum_{j\ge0}i_j(G)x^j.
\]
For polynomials $f,g\in\mathbb Z[x]$, write $f\preceqcf g$ if every
coefficient of $f$ is at most the corresponding coefficient of $g$.
Our main result identifies a single graph maximizing all these
coefficients simultaneously in the $2$-connected setting.

\begin{theorem}\label{thm:main-coefficientwise}
For every $\alpha\ge3$, there exists $N_\alpha$ such that, whenever
$n\ge N_\alpha$, every $n$-vertex $2$-connected graph $G$ with
$\alpha(G)=\alpha$ satisfies
\[
 I(G;x)\preceqcf I(B(n,\alpha);x).
\]
Equality holds if and only if $G\cong B(n,\alpha)$.
\end{theorem}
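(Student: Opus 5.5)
The plan is to reduce the problem to the minimum-edge family and then optimize within it. First, $i_0=1$ and $i_1=n$ for every $G$, and $i_2(G)=\binom n2-e(G)$. By Bougard--Joret \cite[Proposition~6]{BougardJoret2008}, for $\alpha\ge3$ and $n\ge3\alpha$ we have $i_2(G)\le i_2(B(n,\alpha))$, with equality exactly when $G$ is a cycle-linking of $T(n,\alpha)$. So it suffices to prove two things. The first is that every $2$-connected $G$ with $\alpha(G)=\alpha$ that is not a cycle-linking of $T(n,\alpha)$ has $i_j(G)\le i_j(B(n,\alpha))$ for all $j\ge3$. The second is that, among cycle-linkings of $T(n,\alpha)$, the balanced one maximizes every $i_j$ and is the unique such maximizer. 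Uniqueness of equality then follows: a graph matching $B(n,\alpha)$ in degree $2$ is a cycle-linking, and a non-balanced cycle-linking loses in some degree.

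\textbf{Step 1 (stability outside the family).} I would pass to complements, so that $H=\overline G$ is $K_{\alpha+1}$-free with $\omega(H)=\alpha$. The value $i_\beta(B(n,\alpha))$ equals $k_\beta$ of the Tur\'an graph, $\binom{\alpha}{\beta}(n/\alpha)^\beta$ to leading order, minus a correction of order $n^{\beta-2}$ caused by the $\alpha$ linking edges. I would use a Zykov-type stability argument for generalized Tur\'an counts. If $k_\beta(H)$ is within $o(n^\beta)$ of the maximum, then $H$ is close to complete $\alpha$-partite, so $G$ is close to a disjoint union of $\alpha$ near-equal cliques. Next, a cleaning argument puts every vertex into a clique part with few ``defect'' edges. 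Each edge of $G$ between two parts destroys about $n^{\beta-2}$ independent $\beta$-sets. Each missing edge inside a part also costs about $n^{\beta-2}$, and an imbalance in part sizes costs even more. Two-connectivity forces the quotient structure on the parts to be $2$-connected, so there are at least $\alpha$ cross edges, with equality only for a cycle. Any other configuration (more cross edges, a shared attachment vertex, or missing internal edges) incurs an extra loss of order $n^{\beta-2}$. This beats the $O(n^{\beta-3})$ fluctuations inside the cycle-linking family once $n\ge N_\alpha$. Two details need care: a single cross edge removes a number of $\beta$-sets that depends on the part sizes, and a shared attachment vertex changes the count only in lower order. For the latter I expect to compare directly with the cycle-linking obtained by moving an attachment.

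\textbf{Step 2 (optimizing the cyclic order).} For a cycle-linking with clique sizes $a_0,\dots,a_{\alpha-1}$ in cyclic order, I would compute $I(G;x)$ by inclusion--exclusion over the set $S$ of linking edges whose two endpoints are both chosen. Because attachment vertices are distinct, $S$ is a union of disjoint cyclic runs of consecutive edges. A run of $r$ consecutive edges fixes the chosen vertex in its $r-1$ interior cliques and forbids the remaining vertices of those cliques. The polynomial then becomes a product over cliques of factors $1+a_ix$ (or their attachment-forced variants), summed with signs over run patterns. The dependence on the arrangement enters only through products of the form $\prod_{i\in F}a_i$ over cyclic factors $F$. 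I would then apply a balancing switch: take a shortest witness of imbalance, two factors of length $r$ whose numbers of $1$'s differ by $2$, and swap a $0$ and a $1$, reversing a segment if needed. Tracking the $\pm$ terms, I would show that the difference of polynomials is $\succeq_{\mathrm{cf}}0$, vanishes below degree $2r$, and has a positive coefficient at $x^{2r}$. Iterating the switch, which strictly decreases an imbalance potential, reaches the balanced word. This word is unique up to rotation by De Luca--Zamboni, so $B(n,\alpha)$ is the unique maximizer in the family.

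The main obstacle is the coefficientwise sign control in Step~2. The run expansion is alternating, and I must show that every coefficient of the switch difference is nonnegative, not just the leading one. My first attempt would be a cancelling pairing of run patterns that differ only near the switched positions. The terms that survive should be products of nonnegative quantities such as $(a_i-a_j)^2$-type expressions from length-$r$ factors. Minimality of $r$ guarantees that the shorter factors are already balanced and contribute no sign changes.
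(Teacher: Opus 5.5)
Your reduction (Bougard--Joret for $i_2$ and for the equality case, then optimization inside the cycle-linking family) matches the paper. So does the shape of your switch: exchange the $0$ and $1$ at the ends of a shortest imbalance and reverse what lies between. However, the step you yourself call the main obstacle is not proved, and the expansion you plan to use for it is set up wrongly.

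With distinct attachment vertices, two consecutive linking edges $y_{i-1}x_i$ and $y_ix_{i+1}$ can never both have both endpoints chosen, since that would put $x_i$ and $y_i$ from the same clique into one independent set. So the inclusion--exclusion runs over \emph{matchings} of the quotient cycle, as in \cref{prop:matching-formula}. It does not run over runs with fixed interior vertices; that picture belongs to shared attachments.

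More importantly, the missing idea is a closed-form identity that makes any cancelling pairing unnecessary. It needs two ingredients.
\begin{itemize}
\item You need the structure of a shortest imbalance, not just two length-$r$ factors whose weights differ by $2$. Such factors are necessarily disjoint occurrences of $0z0$ and $1z1$ with a common palindromic core $z$, so $w\equiv(z0A1z)(1B0)$ (\cref{lem:first-imbalance}).
\item The switch is then the reversal of the block $U=z0A1z$. Inclusion--exclusion on the two boundary linking edges gives $I(C_q(U^RV);x)-I(C_q(UV);x)=-x^2D_UD_V$, where $D_u=P_{{}^-u}-P_{u^-}$ is the endpoint asymmetry of the clique-path polynomial.
\end{itemize}
The recurrence $D_{aub}=(b-a)xP_u+x^2D_u$ peels off the palindromic ends and yields the difference $x^{2r}(P_A+xD_A)(P_B-xD_B)$. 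Both factors have strictly positive coefficients up to degrees $|A|$ and $|B|$; this is proved by counting sets through a vertex of an end clique with no outside neighbor, which exists since $q\ge2$.

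This identity is what gives coefficientwise nonnegativity, vanishing below degree $2r$, and strict gain in every degree $2r,\ldots,\alpha$. Termination then follows from strict increase of $i_\alpha$, since $2r\le\alpha$, with no separate imbalance potential needed. Without the identity, your Step~2, and with it the uniqueness claim, is only asserted.

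Step~1 is a genuinely different and much heavier route, and as written it is a programme rather than an argument. The per-defect losses you add up are not additive once defects interact. A missing edge inside a clique part of $G$ \emph{increases} the independent-set counts; its cost arises only through the cross edges forced by $\alpha(G)=\alpha$, which your accounting does not track. Also, a shared attachment vertex with only $\alpha$ cross edges is a cut vertex, so that case is excluded by $2$-connectivity rather than being a lower-order effect.

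The paper needs no stability or cleaning. Since $e(G)\ge t(n,\alpha)+\alpha$ with equality only for cycle-linkings, it suffices to show that one extra edge is too costly. Frohmader's theorem together with iterated Frankl--F\"uredi--Kalai, applied to $\overline G$ (\cref{lem:general-ffk,lem:ffk-loss}), does this. For every $G$ with $\alpha(G)=\alpha$ and $e(G)\ge t(n,\alpha)+\alpha+1$, it gives a loss of at least $(\alpha+1)\binom{\alpha-2}{\beta-2}(q-2)^{\beta-2}$ independent $\beta$-sets relative to $T(n,\alpha)$. A union bound gives a loss of at most $\alpha\binom{\alpha-2}{\beta-2}(q+1)^{\beta-2}$ for any cycle-linking (\cref{lem:cycle-loss}). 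This handles all configurations at once and yields an explicit threshold $N_\alpha=O(\alpha^3)$.
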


Uniqueness in \cref{thm:main-coefficientwise} concerns equality of
the entire independence polynomial. For an individual coefficient,
additional maximizers may occur. To describe them, call a cyclic
binary word \emph{$t$-balanced}, for an integer $t\ge1$, if the
balance condition holds for every factor length at most $t$.
For $3\le\beta\le\alpha$, let $\mathcal B_\beta(n,\alpha)$ be the
family of cycle-linkings of $T(n,\alpha)$ whose clique-size words
are $\lfloor\beta/2\rfloor$-balanced. This family is understood to
be closed under isomorphism.

\begin{theorem}\label{thm:fixed-size}
For every $3\le\beta\le\alpha$, there exists $N_{\alpha,\beta}$
such that, whenever $n\ge N_{\alpha,\beta}$, every $n$-vertex
$2$-connected graph $G$ with $\alpha(G)=\alpha$ satisfies
\[
 i_\beta(G)\le i_\beta(B(n,\alpha)).
\]
Equality holds if and only if $G\in\mathcal B_\beta(n,\alpha)$.
\end{theorem}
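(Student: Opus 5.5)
The argument has two stages. First we show that for large $n$ every maximizer of $i_\beta$ among $2$-connected graphs with $\alpha(G)=\alpha$ is a cycle-linking of $T(n,\alpha)$. Then, inside that family, we compute exactly how $i_\beta$ depends on the cyclic word of clique sizes.

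\textbf{Stage 1: reduction to cycle-linkings.} Pass to complements, so that $i_\beta(G)=k_\beta(\overline G)$ and $\overline G$ is $K_{\alpha+1}$-free. A stability form of Zykov's theorem (generalized Tur\'an clique counting) gives the following. If $i_\beta(G)\ge i_\beta(B(n,\alpha))$, which is $\binom{\alpha}{\beta}(n/\alpha)^\beta-O(n^{\beta-1})$, then $V(G)$ splits into $\alpha$ parts $V_1,\dots,V_\alpha$ of sizes $n/\alpha+o(n)$. Each part induces an almost complete graph, and there are only $o(n^2)$ edges between parts. Every cross edge, and every missing edge inside a part, destroys $\Theta(n^{\beta-2})$ independent $\beta$-sets. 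The main term is therefore governed by the edge count, with a correction coming from the part sizes.

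We then make three refinements.
\begin{itemize}
\item Moving exceptional vertices shows that each part is exactly a clique and the sizes are balanced.
\item $2$-connectivity forces the quotient multigraph on parts, with an edge for each cross edge, to be $2$-edge-connected. This needs at least $\alpha$ cross edges. By Bougard--Joret, equality holds only for a cycle with distinct attachment vertices.
\item Each extra cross edge, or each unbalanced pair of sizes, costs $\Omega(n^{\beta-2})$. The interactions between cross edges, which only arise when two edges share a clique pair, affect only $O(n^{\beta-4})$ terms, or $O(n^{\beta-3})$ for adjacent edges. These are dominated.
\end{itemize}
This gives $i_\beta(G)<i_\beta(B)$ unless $G$ is a cycle-linking of $T(n,\alpha)$. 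The error accounting here needs care for small $\beta$ (in particular $\beta=3$), where terms of order $n^{\beta-2}$ and $n^{\beta-3}$ must be compared exactly, not merely asymptotically.

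\textbf{Stage 2: the count inside the family.} For a cycle-linking with clique sizes $a_1,\dots,a_\alpha$ and linking edges $e_i$ between $Q_i$ and $Q_{i+1}$, we use inclusion--exclusion over the set $S$ of linking edges fully contained in the chosen $\beta$-set:
\[
 i_\beta=\sum_{S}(-1)^{|S|}\,\#\{\text{transversal }\beta\text{-sets containing both ends of every } e\in S\}.
\]
Since the attachment vertices are distinct, the edges of $S$ form a matching, or all of them when $S$ is the whole cycle. Each term is a polynomial in the $a_i$:
\begin{itemize}
\item $S$ fixes the cliques it touches;
\item the remaining $\beta-|V(S)|$ cliques are chosen freely, each with weight $a_j$.
\end{itemize}
Summing, $i_\beta$ equals $e_\beta(a)$ plus a signed sum over the ways of choosing a set $R$ of cliques whose complement contains a prescribed set of disjoint consecutive pairs. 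The word enters through products $\prod a_j$ over configurations determined by runs of consecutive cliques.

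Write $a_j=q+b_j$. The dependence on the word is through counts of factors. A maximal run of $r$ consecutive blocked cliques pairs a missing consecutive pair with whatever lies outside. The first degree at which the cyclic arrangement matters is a product over a set of $2r$ cliques that involves exactly the pattern of ones in a length-$r$ window. This matches the abstract's remark that an imbalance of length $r$ first shows up in degree $2r$.

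\textbf{Main obstacle: the switch.} We would prove a balancing switch lemma. If the word is not $t$-balanced with $t=\lfloor\beta/2\rfloor$, take a shortest unbalanced pair of factors, of length $r\le t$. Reversing or exchanging a suitable segment (a rotation swap of a $1$ and a $0$ at the ends of the short imbalance) strictly increases $i_{2r}$ and leaves every $i_j$ with $j<2r$ unchanged. For $\beta\ge 2r$ the gain should be positive coefficientwise. We would show this by expressing the difference $I(G';x)-I(G;x)$ as $x^{2r}$ times a polynomial with nonnegative coefficients, leading to $\prod$-factors like $(q+1)$ minus $q$.

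Conversely, $i_\beta$ depends only on the factor statistics of length at most $\lfloor\beta/2\rfloor$. A $\beta$-set touches at most $\lfloor\beta/2\rfloor$ disjoint consecutive pairs, so only windows up to that length occur. All $t$-balanced words have the same such statistics, so they give the same value. Together, the switch lemma and this invariance give both the bound and the characterization of equality, $\mathcal B_\beta(n,\alpha)$.

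The hardest step is making the switch strictly improving and sign-controlled: the change in $i_\beta$ must be positive while terms from longer windows could have either sign. The first thing to try is to pick the switch so that all windows longer than $r$ change by a balanced amount, which makes their contributions cancel.
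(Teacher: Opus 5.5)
\paragraph{Stage 2: the switch is missing.} The decisive step of your argument is a sign-controlled balancing switch, and that is exactly what you leave open. The mechanism you suggest, arranging that longer windows ``cancel'', is not what works: the gain is nonzero in every degree from $2r$ to $\alpha$. Two ideas are missing.
\begin{enumerate}
\item \emph{A shortest imbalance is rigid.} If $r$ is the least length at which balance fails, then $w$ contains disjoint occurrences of $0z0$ and $1z1$ with $z=z^R$ and $|z|=r-2$. Hence $w\equiv(z0A1z)(1B0)$ (\cref{lem:first-imbalance}). The switch reverses the whole block, $z0A1z\mapsto z1A^R0z$. This exchanges the flanking letters and also reverses $A$; it is not a transposition of one $1$ and one $0$. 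Because the outer copies of $z$ are untouched, it preserves every factor-weight multiset of length $<r$.
\item \emph{An exact identity controls the sign.} Inclusion--exclusion on the two boundary linking edges gives $I(C_q(U^RV);x)-I(C_q(UV);x)=-x^2D_UD_V$, where $D_u=P_{{}^-u}-P_{u^-}$. Peeling the palindromic ends with $D_{aXa}=x^2D_X$ turns the gain into $x^{2r}(P_A+xD_A)(P_B-xD_B)$. Positivity then comes from the strict positivity of $P_u\pm xD_u$ in degrees $0,\dots,|u|$ (\cref{lem:block-reversal,lem:balancing-switch}).
\end{enumerate}

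\paragraph{Stage 2: the equality half is also unsupported.} A $j$-matching of the quotient cycle covers $j$ length-$2$ windows at arbitrary positions. The number of completions of a partial matching depends on the gaps between its edges. So ``a $\beta$-set meets at most $\lfloor\beta/2\rfloor$ pairs, hence only windows of that length occur'' does not follow. The paper proves no such reduction. It gets constancy on $t$-balanced words by iterating the switch (\cref{thm:coefficientwise-order}). Since shorter factor-weight multisets are preserved, the least imbalance length never decreases on the way to the balanced word. Therefore no coefficient of degree below $2r$ ever changes.

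\paragraph{Stage 1.} This is only a sketch, and it contains a sign error. A non-edge inside a part of $G$ \emph{creates} independent $\beta$-sets rather than destroying them. It can be charged only through $\alpha(G)=\alpha$, which forces compensating cross edges. The ``moving exceptional vertices'' step must also preserve $2$-connectivity and the independence number, and this is not carried out. A smaller slip: with distinct attachments, two consecutive linking edges never lie in a common independent set. So only matchings contribute to the inclusion--exclusion, never the whole cycle.

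The paper needs no stability argument; it uses an edge-count dichotomy.
\begin{enumerate}
\item If $e(G)\ge t(n,\alpha)+\alpha+1$, Frohmader's theorem and iterated FFK applied to $\overline G$ give $\turcl{\alpha}{\beta}{n}-i_\beta(G)\ge(\alpha+1)\binom{\alpha-2}{\beta-2}(q-2)^{\beta-2}$. Any cycle-linking loses at most $\alpha\binom{\alpha-2}{\beta-2}(q+1)^{\beta-2}$, which is smaller once $q\ge3(\beta-2)(\alpha+1)$ (\cref{lem:ffk-loss,lem:cycle-loss}).
\item Otherwise $e(G)=t(n,\alpha)+\alpha$, and \cref{prop:BJ} makes $G$ a cycle-linking.
\end{enumerate}
This route also yields an explicit threshold $N_{\alpha,\beta}$, which a stability argument would not.
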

The cyclic-order comparison is related to the block-reversal method
of De Luca and Zamboni
\cite[Propositions~3.1 and~4.4]{DeLucaZamboni2021}
for extremal cyclic continuants. Their work connects reversal
comparisons with balanced binary cyclic words. Here we obtain
coefficientwise control of the corresponding independence
polynomials. More precisely, let $C_q(w)$ denote the cycle-linking
with successive clique orders $q+w_0,\ldots,q+w_{\alpha-1}$.
For every $q\ge2$, if $w$ is not balanced and its shortest imbalance
has length $r$, then \cref{thm:coefficientwise-order} shows that
$i_j(C_q(w))=i_j(B(n,\alpha))$ for $j<2r$, whereas
$i_j(C_q(w))<i_j(B(n,\alpha))$ for $2r\le j\le\alpha$.
Thus the shortest imbalance determines exactly the first coefficient
that falls below the balanced value, explaining the scale
$\lfloor\beta/2\rfloor$ in \cref{thm:fixed-size}.
\begin{remark}\label{rem:thresholds}
In \cref{thm:fixed-size}, one may take
$
N_{\alpha,\beta}=3\alpha(\beta-2)(\alpha+1).
$
Taking the maximum over $3\le\beta\le\alpha$ gives the sufficient
threshold
$
N_\alpha=3\alpha(\alpha-2)(\alpha+1)
$
for \cref{thm:main-coefficientwise}.
Thus $N_{\alpha,\beta}=O_\beta(\alpha^2)$ for fixed $\beta$,
whereas $N_\alpha=O(\alpha^3)$.
These thresholds are not claimed to be optimal.
For $G\in\mathcal B_\beta(n,\alpha)$, the extremal value
$i_\beta(G)$ can be evaluated using \cref{prop:matching-formula}.
\end{remark}

\paragraph{Our approach.}
The proof has two components. Within the cycle-linking family, a
shortest imbalance in the clique-size word yields a two-edge switch.
An exact polynomial identity shows that an imbalance of length $r$
produces a strict improvement in precisely the degrees from $2r$ to
$\alpha$, while the switch preserves the weight multisets of all
shorter cyclic factors. Iterating these switches gives the balanced
extremal order and the complete fixed-size equality characterization.
For the global reduction, Frohmader--Frankl--F\"uredi--Kalai clique
counting shows that, for sufficiently large $n$, any graph with more
edges than the Bougard--Joret minimum has fewer independent
$\beta$-sets than a cycle-linking. The edge-extremal classification
then reduces the problem to the cyclic-order comparison.

\section{Preliminary}
\label{sec:tools}

\subsection{Clique counts in balanced multipartite graphs}

For integers $r\ge1$ and $m\ge0$, let $\Turan_r(m)$ be the balanced
complete $r$-partite graph on $m$ vertices, allowing empty parts.
Write
\[
 \turcl{r}{k}{m}:=k_k(\Turan_r(m))=\binom mk_r.
\]
We use $\turcl{r}{0}{m}=1$ and $\turcl{r}{k}{m}=0$ when $k>r$ or
$m<k$. Since $\overline{T(n,\alpha)}=\Turan_\alpha(n)$, setting
$t(n,\alpha):=e(T(n,\alpha))$ gives
\begin{equation}\label{eq:T-independent}
 i_\beta(T(n,\alpha))=\turcl{\alpha}{\beta}{n},
 \qquad
 \turcl{\alpha}{2}{n}=\binom n2-t(n,\alpha).
\end{equation}

\begin{lemma}\label{lem:turan-recurrence}
Let $r\ge2$, $1\le k\le r$, and $m\ge1$. Then
\begin{equation}\label{eq:general-recurrence}
 \turcl{r}{k}{m}-\turcl{r}{k}{m-1}
 =\turcl{r-1}{k-1}{m-\ceil{m/r}}.
\end{equation}
\end{lemma}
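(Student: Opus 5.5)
We view $\Turan_r(m)$ as obtained from $\Turan_r(m-1)$ by adding one vertex to a smallest part. The balanced partition of $m$ vertices into $r$ parts has part sizes $\floor{m/r}$ and $\ceil{m/r}$. The plan is to delete one vertex $v$ from a largest part, of size $\ceil{m/r}$, and count the $k$-cliques that contain $v$.

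First we check that deleting $v$ leaves a balanced $r$-partite graph on $m-1$ vertices. Write $m=ar+b$ with $0\le b<r$.
\begin{itemize}
 \item If $b\ge1$, the parts have sizes $a+1$ ($b$ times) and $a$ ($r-b$ times). Removing $v$ leaves $b-1$ parts of size $a+1$ and the rest of size $a$, which is balanced.
 \item If $b=0$, all parts have size $a=\ceil{m/r}$. Removing $v$ gives sizes $a-1,a,\dots,a$, again balanced. This includes $a=1$, where one part becomes empty; empty parts are allowed by the definition.
\end{itemize}
Hence $\Turan_r(m)-v\cong\Turan_r(m-1)$. Since the clique count is an isomorphism invariant, the number of $k$-cliques avoiding $v$ is $\turcl{r}{k}{m-1}$. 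So the left side of \eqref{eq:general-recurrence} counts exactly the $k$-cliques containing $v$.

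Next we count these cliques. A $k$-clique containing $v$ consists of $v$ together with a $(k-1)$-clique in the common neighbourhood of $v$. That neighbourhood is the union of the other $r-1$ parts, with no vertices of $v$'s own part. The induced subgraph on these vertices is complete $(r-1)$-partite on $m-\ceil{m/r}$ vertices.

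We then check this induced subgraph is balanced. Its parts are the remaining $r-1$ parts of a balanced partition, so their sizes still differ by at most one. Thus it is $\Turan_{r-1}(m-\ceil{m/r})$, and the count is $\turcl{r-1}{k-1}{m-\ceil{m/r}}$.

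Finally we check the edge conventions.
\begin{itemize}
 \item For $k=1$ both sides equal $1$, using $\turcl{r-1}{0}{\cdot}=1$. The left side is $m-(m-1)=1$.
 \item When $k-1>r-1$ the count is $0$ on both sides, but this cannot occur since $k\le r$.
 \item When $m-\ceil{m/r}<k-1$, the convention $\turcl{r-1}{k-1}{\cdot}=0$ agrees with the combinatorial count of $0$.
\end{itemize}
The main obstacle is only the bookkeeping in the balancedness check after deletion, in particular the case $r\mid m$. The rest is a direct bijection.
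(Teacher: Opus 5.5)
Your proof is correct and is essentially the paper's argument run in reverse. The paper inserts a vertex into a smallest part of $\Turan_r(m-1)$, whereas you delete a vertex from a largest part of $\Turan_r(m)$; in both cases the affected $k$-cliques correspond bijectively to copies of $K_{k-1}$ in the balanced complete $(r-1)$-partite neighbourhood on $m-\ceil{m/r}$ vertices, and your explicit check that the deletion leaves $\Turan_r(m-1)$ (including the case $r\mid m$) simply spells out a step the paper leaves implicit.
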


\begin{proof}
Insert a vertex into a smallest part of $\Turan_r(m-1)$. Its
neighborhood is a balanced complete $(r-1)$-partite graph on
$m-\lceil m/r\rceil$ vertices, and each new $K_k$ consists of this
vertex and a $K_{k-1}$ in its neighborhood.
\end{proof}

\subsection{A Frohmader--FFK bound}

For a simplicial complex $\Delta$, let $c_j(\Delta)$ count its faces
of cardinality $j$. The complex is $r$-colorable if its vertices can
be colored with $r$ colors so that every face has distinct colors.
The clique complex of a graph consists of its cliques and is a flag
complex. Frohmader's theorem \cite[Theorem~1.1]{Frohmader2008}
implies that the clique complex of a $K_{r+1}$-free graph has the same
face numbers as an $r$-colorable complex. We combine this with the
canonical form of the Frankl--F\"uredi--Kalai theorem
\cite{FranklFurediKalai1988}; see
\cite[Lemma~3.6 and Theorem~3.7]{Frohmader2008}.

\begin{theorem}[Frohmader {\cite[Theorem~1.1]{Frohmader2008}}]
For every flag simplicial complex $\Delta$, there exists a balanced
simplicial complex $\Gamma$ with the same face vector; that is,
\[
 c_j(\Gamma)=c_j(\Delta)
 \qquad\text{for every }j\ge0.
\]
\end{theorem}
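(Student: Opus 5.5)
The plan is to reduce the statement to a numerical criterion and then verify that criterion by induction on the number of vertices. Let $r$ be the largest face size of $\Delta$, so that $\Delta$ is the clique complex of a $K_{r+1}$-free graph. The numerical criterion is the Frankl--F\"uredi--Kalai characterization of face vectors of $r$-colorable complexes. An integer vector $(c_0,c_1,\dots,c_r)$ with $c_0=1$ is the face vector of an $r$-colorable complex if and only if, for every $j$, the number $c_{j+1}$ is at most the number of $(j+1)$-faces of the $r$-colored canonical (compressed) complex whose number of $j$-faces is $c_j$. This is the colored analogue of Kruskal--Katona, stated as \cite[Lemma~3.6 and Theorem~3.7]{Frohmader2008}, and I will take it as given. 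For such an extremal canonical complex it suffices to build it from $r$-partite Tur\'an-type pieces, using $\turcl{r}{k}{m}$ as in \cref{lem:turan-recurrence}. It therefore remains to prove that the face vector of every flag complex of dimension $r-1$ satisfies these $r$-colored Kruskal--Katona inequalities. Once this holds, the canonical $r$-colorable complex with that face vector is the required balanced complex $\Gamma$.

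I will prove the inequalities by induction on the number of vertices, via the vertex decomposition
\[
 c_j(\Delta)=c_j(\Delta- v)+c_{j-1}(\operatorname{lk}v).
\]
Both pieces stay in the flag class. The deletion $\Delta- v$ is the clique complex of a $K_{r+1}$-free graph. The link $\operatorname{lk}v$ is the clique complex of the neighborhood $N(v)$, which is $K_r$-free, so its faces satisfy the $(r-1)$-colored inequalities by induction. For the vertex $v$, I would choose one that maximizes (or minimizes) the number of $j$-faces containing it, or average over all vertices. The point of the choice is to control how the $j$-faces of $\Delta$ split between the deletion and the link.

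The main obstacle is the numerical combination step. I must show that if $(a_j)$ satisfies the $r$-colored bounds and $(b_{j-1})$ satisfies the $(r-1)$-colored bounds, with the link sitting inside the deletion in a compatible way, then $(a_j+b_{j-1})$ satisfies the $r$-colored bounds. The canonical bounding functions are not additive. My first attempt would mimic the proof of Kruskal--Katona by compression: perform colored shifting on the graph, in the spirit of Zykov symmetrization, replacing the neighborhood of one nonadjacent vertex by that of another. The aim is to show that the ratio $c_{j+1}/c_j$ does not decrease, and to reach a complete multipartite, hence $r$-colorable, configuration. If shifting fails to preserve flagness, I would fall back on the inductive inequality. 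There the key estimate is that the link's $(j-1)$-faces number at most the shadow-type quantity determined by the star of $v$, as in the Frankl--F\"uredi--Kalai argument, with the choice of extremal $v$ supplying the needed monotonicity.
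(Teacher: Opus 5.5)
The paper does not prove this statement; it imports it from \cite[Theorem~1.1]{Frohmader2008} and uses it only inside \cref{lem:general-ffk}. Judged on its own, your proposal has a genuine gap. Your first paragraph is the correct, standard reduction. By the Frankl--F\"uredi--Kalai characterization, the theorem is equivalent to the inequality $k_{j+1}(H)\le\Phi_{r,j}(k_j(H))$ for every $K_{r+1}$-free graph $H$. Here $\Phi_{r,j}(m)$ denotes the right-hand side of \cref{thm:ffk}, computed from the $(j,r)$-canonical representation of $m$. (To obtain $\Gamma$ you need the realizability half of that characterization, which goes beyond the one-sided form of \cref{thm:ffk} used in this paper.) But proving this inequality for flag complexes is the entire content of Frohmader's theorem. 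Your proposal explicitly leaves it open as ``the main obstacle.''

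Neither route you sketch closes it.

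\textbf{Colored shifting.} Colored shifting needs a proper $r$-coloring, and a flag complex with largest face size $r$ need not have one. For example, the clique complex of $C_5$ has $r=2$ but is not $2$-colorable, and Mycielski graphs are triangle-free with arbitrarily large chromatic number. This non-colorability is exactly why the theorem is hard.

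\textbf{Zykov symmetrization.} What you actually describe, replacing $N(u)$ by $N(v)$ for a non-neighbor $v$ of $u$, is Zykov symmetrization. It does preserve flagness and $K_{r+1}$-freeness. However, it changes $c_j$ by $c_{j-1}(\operatorname{lk}v)-c_{j-1}(\operatorname{lk}u)$. Since $\Phi_{r,j}$ is not proportional to its argument, a nondecreasing ratio $c_{j+1}/c_j$ does not pull the bound back from the terminal complete multipartite graph unless you also control how $c_j$ moved. Symmetrization gives no such control.

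\textbf{Vertex-deletion induction.} Here you would need a specified choice of $v$ and a proof that
$\Phi_{r,j}(a)+\Phi_{r-1,j-1}(b)\le\Phi_{r,j}(a+b)$ for $a=c_j(\Delta-v)$ and $b=c_{j-1}(\operatorname{lk}v)$, under whatever relation between $a$ and $b$ that choice guarantees. None of this is formulated, and ``the shadow-type quantity determined by the star of $v$'' is not a usable inequality. This lemma also has no slack. Take the clique complex of $\Turan_r(m)$ and $v$ in a largest part; by \cref{lem:turan-recurrence}, both inductive bounds and the target bound hold with equality. So the non-additivity of canonical representations must be handled exactly. That combination lemma, together with the right vertex choice, is the missing idea.
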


Recall that, for $r\ge k$, every positive integer $m$ has a unique
$(k,r)$-canonical representation
\[
 m=
 \binom{n_k}{k}_r+
 \binom{n_{k-1}}{k-1}_{r-1}+\cdots+
 \binom{n_{k-h}}{k-h}_{r-h},
\]
where
\[
 n_{k-i}-\floor{n_{k-i}/(r-i)}>n_{k-i-1}
 \quad(0\le i<h),
 \qquad
 n_{k-h}\ge k-h>0;
\]
see \cite[Lemma~3.6]{Frohmader2008}.

\begin{theorem}[Frankl--F\"uredi--Kalai
{\cite{FranklFurediKalai1988}; see also
\cite[Theorem~3.7]{Frohmader2008}}]\label{thm:ffk}
Let $1\le k\le r$, and let $\Delta$ be an $r$-colorable
simplicial complex. If
\[
 c_k(\Delta)=
 \binom{n_k}{k}_r+
 \binom{n_{k-1}}{k-1}_{r-1}+\cdots+
 \binom{n_{k-h}}{k-h}_{r-h}
\]
is the $(k,r)$-canonical representation of $c_k(\Delta)$, then
\[
 c_{k+1}(\Delta)\le
 \binom{n_k}{k+1}_r+
 \binom{n_{k-1}}{k}_{r-1}+\cdots+
 \binom{n_{k-h}}{k-h+1}_{r-h}.
\]
\end{theorem}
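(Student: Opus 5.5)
The statement is the Kruskal--Katona theorem for $r$-colorable complexes. I would prove it in the complement-free, \emph{upper shadow} form: if a family $\mathcal F$ of $k$-sets is $r$-colored, then the number of $(k+1)$-sets all of whose $k$-subsets lie in $\mathcal F$ is at most the displayed sum. Since $\Delta$ is a simplicial complex, every face of size $k+1$ has all its $k$-subfaces in $\Delta$, so this gives the stated inequality. The approach follows Frankl--F\"uredi--Kalai and has three steps: color-preserving compression, identification of compressed families with initial segments of a colored order, and an inductive count.

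\textbf{Step 1: compression.} I would realize the coloring as an embedding of the vertex set into $[r]\times\mathbb N$, where vertex $(c,a)$ has color $c$. Without loss of generality the color classes have sizes $|V_1|\ge\cdots\ge|V_r|$. For each color $c$ and indices $a<b$, I would apply the shift $S_{c,a,b}$, which replaces $(c,b)$ by $(c,a)$ whenever possible. I would also apply shifts between color classes that exchange the roles of two colors on the relevant faces. Since every face uses distinct colors, each shift maps an $r$-colorable complex to an $r$-colorable complex with the same $c_k$. The standard argument shows that $c_{k+1}$ does not decrease. Iterating until stable yields a compressed complex.

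\textbf{Step 2: shape of compressed families.} I would show that a fully compressed family of $k$-faces is an initial segment of a fixed colored squashed order. The extremal initial segment of size $\binom{n_k}{k}_r$ is exactly the set of $k$-cliques of $\Turan_r(n_k)$. The next block of the canonical representation consists of the faces that add one further vertex to a smallest part of $\Turan_r(n_k)$. Its link is a balanced $(r-1)$-partite configuration on $n_{k-1}$ vertices, and this is where the numbers $\binom{n_{k-1}}{k-1}_{r-1}$ and the condition $n_{k-i}-\lfloor n_{k-i}/(r-i)\rfloor>n_{k-i-1}$ arise. Here \cref{lem:turan-recurrence} is precisely the identity that makes the block decomposition consistent: adding one vertex to a smallest part of $\Turan_r(m-1)$ contributes $\turcl{r-1}{k-1}{m-\lceil m/r\rceil}$ new $k$-cliques.

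\textbf{Step 3: induction.} I would then argue by induction on $k$ and on the length $h$ of the canonical representation. The plan is to split the compressed complex according to whether a face contains the last vertex $v$ of the smallest color class. The part avoiding $v$ is handled by the $\binom{n_k}{k}_r$ term. The link of $v$ is an $(r-1)$-colorable complex with $c_{k-1}$ equal to the remaining canonical tail, so induction applied to the link gives the tail sum with every lower index raised by one.

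I expect the main obstacle to be Step 1: proving that the color-respecting shifts do not decrease $c_{k+1}$ while preserving $r$-colorability. The shifts that move faces between color classes are the delicate ones, since they are where FFK's argument departs from the classical Kruskal--Katona proof. If this step proves hard to carry out directly, my fallback is to cite it, as the paper does, from \cite{FranklFurediKalai1988} and \cite[Theorem~3.7]{Frohmader2008}.
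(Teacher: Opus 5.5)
The paper gives no proof of this theorem. It imports it from Frankl--F\"uredi--Kalai via \cite[Theorem~3.7]{Frohmader2008} and uses it only as a black box in \cref{lem:general-ffk}, so your fallback is exactly the paper's route. Your self-contained sketch is not yet a proof, and the gap is in Steps~1--2.

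\textbf{Cross-color shifts.} These are never defined, and the natural reading (replace a vertex of one color by a vertex of another) destroys $r$-colorability. Take $u,a$ of color $1$, $v,w$ of color $2$, and edges $uw,av,aw$. Replacing $u$ by $v$ yields the edges $vw,av,aw$, a triangle that admits no $2$-coloring. Some such operation is nevertheless indispensable. An arbitrary $r$-colorable complex can have very unbalanced color classes, and forcing it into the Tur\'an shape is the real content of FFK.

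\textbf{Compressed versus initial segment.} Even with a valid shifting, ``fully compressed'' does not imply ``initial segment''. Take $r\ge5$ and five vertices of distinct colors, so within-color shifts are vacuous. The star $\{12,13,14,15\}$ is stable under every shift $S_{ij}$ with $i<j$. Yet it is not the initial segment $\{12,13,23,14\}$: it spans no triangle, while the latter spans one. Reaching initial segments needs much stronger, $UV$-type compressions. Showing that these preserve colorability and do not decrease $c_{k+1}$ is precisely what would have to be proved.

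\textbf{Step~3.} This step does not close the gap, because it only describes the extremal initial segment. For a general compressed complex the split $c_k(\Delta)=c_k(\Delta-v)+c_{k-1}(\operatorname{lk} v)$ is arbitrary, so neither piece has the canonical form you assign to it. A Frankl-style induction would need two ingredients, and neither appears in your sketch:
\begin{itemize}
\item a containment, derived from shiftedness, relating the faces avoiding $v$ to the link of $v$ (within-color shifting only connects faces that meet the color class of $v$ to that link);
\item a numerical lemma showing that the canonical bound function combines correctly over every possible split.
\end{itemize}

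\textbf{A minor point.} For $k=1$ your upper-shadow reformulation must count only colorful $(k+1)$-sets. Two vertices of the same color have both $1$-subsets in $\mathcal F$, so the uncolored count exceeds $\binom{n_1}{2}_r$.
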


The following iterated consequence is the form needed later.
\begin{lemma}\label{lem:general-ffk}
Let $r\ge\beta\ge2$ and $N\ge\beta$. If $H$ is $K_{r+1}$-free and
\[
 e(H)\le\turcl{r}{2}{N}+s,
 \qquad 0\le s<N-\floor{N/r},
\]
then
\begin{equation}\label{eq:general-ffk}
 k_\beta(H)\le\turcl{r}{\beta}{N}
                  +\turcl{r-1}{\beta-1}{s}.
\end{equation}
\end{lemma}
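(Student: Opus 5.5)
The plan is to pass from $H$ to a balanced complex via Frohmader's theorem and then climb one dimension at a time with \cref{thm:ffk}, keeping the shape $\binom{N}{k}_r+\binom{s}{k-1}_{r-1}$ at every step. Let $\Delta$ be the clique complex of $H$. Since $H$ is $K_{r+1}$-free, Frohmader's theorem gives an $r$-colorable complex $\Gamma$ with $c_j(\Gamma)=c_j(\Delta)$ for all $j$. In particular $c_2(\Gamma)=e(H)$ and $c_\beta(\Gamma)=k_\beta(H)$. So it suffices to prove the following claim by induction on $k=2,\dots,\beta$:
\[
 c_k(\Gamma)\le \binom{N}{k}_r+\binom{s}{k-1}_{r-1}
 =\turcl{r}{k}{N}+\turcl{r-1}{k-1}{s}.
\]
The case $k=2$ is the hypothesis, since $\binom{s}{1}_{r-1}=s$.

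For the inductive step, set $m_k:=\binom{N}{k}_r+\binom{s}{k-1}_{r-1}$. I will check that this expression is the $(k,r)$-canonical representation of $m_k$, distinguishing two cases.
\begin{itemize}
\item If $s\ge k-1$, it is a representation with $h=1$, $n_k=N$, $n_{k-1}=s$. The required inequality $N-\floor{N/r}>s$ is exactly the hypothesis on $s$, and $n_{k-1}=s\ge k-1>0$ holds.
\item If $s<k-1$, then $\binom{s}{k-1}_{r-1}=0$ and the representation is simply $\binom{N}{k}_r$ with $h=0$. This uses $N\ge\beta\ge k$.
\end{itemize}
In both cases \cref{thm:ffk} applied with $c_k=m_k$ bounds $c_{k+1}$ by $\binom{N}{k+1}_r+\binom{s}{k}_{r-1}$, which is $m_{k+1}$. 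The hypotheses $k\le r$ and $k<\beta\le r$ ensure the theorem applies.

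The remaining issue is that the induction only gives $c_k(\Gamma)\le m_k$, not equality. This is the step I expect to be the main obstacle. I need the Frankl--F\"uredi--Kalai upper bound, viewed as a function $F_{k,r}(m)$ of $c_k=m$, to be nondecreasing in $m$.

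I would derive this monotonicity from the extremal description behind \cref{thm:ffk}. The value $F_{k,r}(m)$ is the number of $(k+1)$-faces of the compressed $r$-colored complex whose $k$-faces are the first $m$ $k$-sets in the colored-squashed order. Here a $(k+1)$-set is counted when all its $k$-subsets lie in that initial segment. Enlarging the initial segment can only add such $(k+1)$-sets, so $F_{k,r}$ is nondecreasing.

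Alternatively, I would argue directly on complexes. Given an $r$-colorable $\Gamma$ with $c_k(\Gamma)=m'<m$, I would add new $k$-faces on fresh color-respecting vertices, together with all their subfaces, until exactly $m$ $k$-faces are present. This keeps the complex $r$-colorable and does not decrease $c_{k+1}$, so $c_{k+1}(\Gamma)\le F_{k,r}(m)$. The lower-dimensional faces change, but \cref{thm:ffk} only uses $c_k$.

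With monotonicity in hand, $c_{k+1}(\Gamma)\le F_{k,r}(c_k(\Gamma))\le F_{k,r}(m_k)=m_{k+1}$, which closes the induction. Taking $k=\beta$ yields \eqref{eq:general-ffk}. The degenerate case $s=0$ is covered by the case $s<k-1$.
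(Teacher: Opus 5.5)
Your proposal is correct and matches the paper's proof essentially step for step. The paper also uses Frohmader's theorem to get an $r$-colorable complex, inducts on face size with the same two-case check that $\binom{N}{k}_r+\binom{s}{k-1}_{r-1}$ is the $(k,r)$-canonical representation, and handles the inequality-versus-equality issue by padding. Your "direct" argument, which adds $k$-faces on fresh vertices and leaves $c_{k+1}$ unchanged, is exactly the paper's device of adjoining $h_j$ disjoint simplices. The compression-based monotonicity argument you give first is unnecessary, and it relies on a sharpness description of FFK that the paper never states, so keep the padding route.
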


\begin{proof}
Frohmader's theorem gives an $r$-colorable complex $\Delta$ with
$c_j(\Delta)=k_j(H)$ for every $j$; unused colors are allowed if its
dimension is smaller than $r-1$. For $2\le j\le\beta$, put
\[
 B_j:=\binom Nj_r+\binom{s}{j-1}_{r-1}.
\]
We prove $c_j(\Delta)\le B_j$ by induction on $j$. The case $j=2$
is the assumed edge bound, since $\binom{s}{1}_{r-1}=s$.

Suppose that $c_j(\Delta)\le B_j$ for some $2\le j<\beta$, and let
$h_j=B_j-c_j(\Delta)$. Take $h_j$ pairwise vertex-disjoint simplices
$\Sigma_1,\ldots,\Sigma_{h_j}$, each on $j$ new vertices, and form
$\Delta'=\Delta\sqcup\Sigma_1\sqcup\cdots\sqcup\Sigma_{h_j}$.
Each added simplex has one face of cardinality $j$ and none of
cardinality $j+1$. Since $j<\beta\le r$, the new complex is still
$r$-colorable, and
\[
 c_j(\Delta')=B_j,
 \qquad c_{j+1}(\Delta')=c_{j+1}(\Delta).
\]
Only these two face numbers are relevant to the next application of
FFK; the increases in lower face numbers cause no difficulty.

If $s\ge j-1$, the two-term expression defining $B_j$ is its
$(j,r)$-canonical representation: its required inequalities are
$N-\lfloor N/r\rfloor>s$ and $s\ge j-1$. If $s<j-1$, the second
term vanishes and $B_j=\binom Nj_r$ is a one-term canonical
representation, valid because $N\ge\beta\ge j$.
The Frankl--F\"uredi--Kalai theorem, applied to $\Delta'$, therefore
gives
\[
 c_{j+1}(\Delta)=c_{j+1}(\Delta')
 \le\binom N{j+1}_r+\binom{s}{j}_{r-1}=B_{j+1}.
\]
In the one-term case the second term here also vanishes. This
completes the induction, and $c_\beta(\Delta)=k_\beta(H)$ proves the
claim.
\end{proof}

\subsection{Minimum-edge 2-connected graphs}

We use the following large-order specialization of Bougard and
Joret's classification \cite[Proposition~6]{BougardJoret2008}.

\begin{proposition}[Bougard--Joret]\label{prop:BJ}
Let $\alpha\ge3$ and $n\ge3\alpha$. Every $n$-vertex $2$-connected
graph $G$ with $\alpha(G)=\alpha$ satisfies
\begin{equation}\label{eq:BJ-lower}
 e(G)\ge t(n,\alpha)+\alpha,
\end{equation}
with equality if and only if $G$ is a cycle-linking of $T(n,\alpha)$.
\end{proposition}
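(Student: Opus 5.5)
The plan is to obtain \cref{prop:BJ} from the general statement of \cite[Proposition~6]{BougardJoret2008} and not to reprove it from scratch. That proposition gives, for all $n$ and $\alpha$, the minimum number of edges of a $2$-connected graph of order $n$ with independence number $\alpha$, together with the list of extremal graphs. The list consists of a few families depending on how $n$ compares with small multiples of $\alpha$. The work is therefore bookkeeping: show that when $\alpha\ge3$ and $n\ge3\alpha$ the minimum equals $t(n,\alpha)+\alpha$, and that only the cycle-linkings of $T(n,\alpha)$ survive in the equality list.

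\emph{Identifying the minimum.} In $T(n,\alpha)$ every clique has order $q$ or $q+1$ with $q=\lfloor n/\alpha\rfloor\ge3$. Each clique therefore has at least two distinct vertices available as attachment points, so a cycle-linking exists. A direct count gives $t(n,\alpha)+\alpha$ edges for it, and it is $2$-connected because its block structure is a cycle of $2$-connected cliques joined by single edges at distinct vertices. Comparing with the Bougard--Joret formula, I would check that in the range $n\ge3\alpha$ their bound reads $t(n,\alpha)+\alpha$, which gives \eqref{eq:BJ-lower}.

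\emph{Equality.} I would go through their equality list. The alternative extremal configurations in their classification occur only when some cliques are tiny (orders $1$ or $2$) or when $\alpha\le2$, and these are excluded by $q\ge3$ and $\alpha\ge3$. What remains is exactly the cycle-linkings. If one prefers a self-contained argument for this step, I would proceed in three stages.
\begin{itemize}[nosep]
\item Since $\overline G$ is $K_{\alpha+1}$-free, Tur\'an's theorem gives $e(G)\ge t(n,\alpha)$, with equality only for $T(n,\alpha)$.
\item Every $2$-connected $G$ with $\alpha(G)=\alpha$ must contain at least $\alpha$ edges beyond a spanning disjoint union of $\alpha$ cliques. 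The reason is that contracting the cliques yields a $2$-connected multigraph on $\alpha$ vertices, and such a multigraph has at least $\alpha$ edges, with equality exactly for a cycle whose consecutive links use distinct vertices in each clique.
\item A stability step then shows that $e(G)\le t(n,\alpha)+\alpha$ with $n\ge3\alpha$ forces $G$ to contain a spanning $T(n,\alpha)$-shaped subgraph. This would be done by induction on $\alpha$: delete $N[v]$ for a minimum-degree vertex $v$, which drops the independence number by one.
\end{itemize}

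The main obstacle in the self-contained route is the last stage. Near-extremal graphs might a priori consist of unequal cliques plus extra edges, and one must show that unbalancing clique sizes costs more than the $\alpha$ spare edges allow. This is where $n\ge3\alpha$ is needed: moving a vertex between cliques of sizes differing by at least $2$ changes the edge count by at least $1$ in a way that the linking budget cannot absorb. Given that the paper explicitly labels \cref{prop:BJ} as a specialization of the cited classification, I would rely on \cite[Proposition~6]{BougardJoret2008} and present only the verification of the parameter range.
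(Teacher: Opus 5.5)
Your primary route matches the paper's: invoke Bougard--Joret's Proposition~6 and check the parameter range. The paper makes that check concrete by noting that twisted cycle-linkings arise only for $2\alpha<n<3\alpha$, odd subdivisions of $K_4$ have $n=2\alpha+2$, and the $K_4$-on-$K_4$ graph has independence number two. The self-contained sketch would not work as written: deleting $N[v]$ destroys $2$-connectivity, and $n\ge3\alpha$ serves to exclude those equal-edge alternatives rather than to penalize unbalanced cliques. So rely on the citation, as you propose.
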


\begin{proof}
The cited classification gives the stated minimum and three types of
equality cases: cycle-linkings of twisted $T(n,\alpha)$ graphs, odd
subdivisions of $K_4$, and the exceptional graph obtained by pasting
$K_4$ onto $K_4$. Nontrivial twists occur only for
$2\alpha<n<3\alpha$. Odd subdivisions of $K_4$ have
$n-2\alpha=2$, whereas here $n-2\alpha\ge\alpha\ge3$.
The exceptional graph has independence number two. Thus only ordinary
cycle-linkings of $T(n,\alpha)$ remain.
\end{proof}

\section{Cycle-linked Tur\'an graphs and exact counting}
\label{sec:cycle-linkings}

\subsection{The cycle-linked construction}

Throughout the remaining sections, write $n=q\alpha+\rho$, where
$\alpha\ge3$, $q\ge2$, and $0\le\rho<\alpha$. For a cyclic binary
word $w=w_0\cdots w_{\alpha-1}$ with $\rho$ ones, take disjoint
cliques $C_i\cong K_{s_i}$, where $s_i:=q+w_i$. Choose distinct
vertices $x_i,y_i\in C_i$ and add the linking edges
\[
 y_ix_{i+1},\qquad i\in\mathbb Z/\alpha\mathbb Z.
\]
Denote the resulting graph by $C_q(w)$. Since $s_i\ge2$, the
attachment choices are possible. All such choices yield isomorphic
graphs, as do rotations or reversals of $w$.

\begin{figure}[ht]
\centering
\begin{tikzpicture}[scale=.88,every node/.style={font=\small}]
 \node[draw,rounded corners,minimum width=1.45cm,minimum height=.85cm]
   (C1) at (90:3cm) {$K_{q+w_0}$};
 \node[draw,rounded corners,minimum width=1.45cm,minimum height=.85cm]
   (C2) at (30:3cm) {$K_{q+w_1}$};
 \node[draw,rounded corners,minimum width=1.45cm,minimum height=.85cm]
   (C3) at (-30:3cm) {$K_{q+w_2}$};
 \node[draw,rounded corners,minimum width=1.45cm,minimum height=.85cm]
   (C4) at (-90:3cm) {$K_{q+w_3}$};
 \node[draw,rounded corners,minimum width=1.45cm,minimum height=.85cm]
   (C5) at (-150:3cm) {$K_{q+w_4}$};
 \node[draw,rounded corners,minimum width=1.45cm,minimum height=.85cm]
   (C6) at (150:3cm) {$K_{q+w_5}$};
 \draw[thick] (C1)--(C2)--(C3)--(C4)--(C5)--(C6)--(C1);
 \node at (0,0) {cyclic order $w$};
\end{tikzpicture}
\caption{A cycle-linking, shown with six clique components. The two
linking edges incident with each clique use distinct vertices.}
\label{fig:cycle-linking}
\end{figure}

\begin{lemma}\label{lem:construction-admissible}
The graph $C_q(w)$ is $2$-connected and has independence number
$\alpha$.
\end{lemma}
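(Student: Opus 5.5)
The statement has two parts, and we verify them separately. Both are elementary; the only care needed is in the connectivity argument, where we use that each clique has at least two vertices and that $\alpha\ge3$.

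\emph{Independence number.} The vertex set of $C_q(w)$ is partitioned into the $\alpha$ cliques $C_0,\dots,C_{\alpha-1}$. An independent set meets each clique in at most one vertex, so $\alpha(C_q(w))\le\alpha$. For the reverse inequality we exhibit an independent set of size $\alpha$. Since $s_i=q+w_i\ge q\ge2$, each clique contains $x_i$ and $y_i$. The only edges between different cliques are the linking edges $y_ix_{i+1}$, so every vertex incident with a linking edge is some $x_j$ or $y_j$. Take $z_i:=x_i$ for every $i$. Two vertices $x_i,x_j$ with $i\ne j$ lie in different cliques, and neither is a $y$-vertex, so no linking edge joins them. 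Hence $\{x_0,\dots,x_{\alpha-1}\}$ is independent, and $\alpha(C_q(w))=\alpha$.

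\emph{$2$-connectivity.} The graph has $n\ge2\alpha\ge6>2$ vertices, so it suffices to show that $G-v$ is connected for every vertex $v$. We first check that $G$ itself is connected. Each $C_i$ is complete, hence connected, and the edges $y_ix_{i+1}$ join consecutive cliques around the cycle $\mathbb Z/\alpha\mathbb Z$.

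Now fix $v$, say $v\in C_j$. Every other clique $C_i$ with $i\ne j$ is untouched, so it stays complete and connected. The set $C_j-v$ is nonempty because $s_j\ge2$, and it induces a clique, so it is connected too.
\begin{itemize}
\item If $v\notin\{x_j,y_j\}$, all linking edges survive. The contracted graph is the full cycle on the $\alpha$ cliques, so $G-v$ is connected.
\item If $v=x_j$, only the edge $y_{j-1}x_j$ is lost. Since $x_j\ne y_j$, the edge $y_jx_{j+1}$ survives. The remaining linking edges connect $C_j-v,C_{j+1},\dots,C_{j-1}$ along a Hamiltonian path of the contracted cycle. Each piece is connected, so $G-v$ is connected.
\item The case $v=y_j$ is symmetric to $v=x_j$.
\end{itemize}
Therefore $C_q(w)$ has no cut vertex, and it is $2$-connected.
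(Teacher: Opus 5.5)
Your proof is correct and follows the paper's argument. For the independence number you use the same two facts: an independent set meets each clique at most once, and $\{x_i\}$ is an independent transversal. For $2$-connectivity, your three-case analysis is an expanded form of the paper's observation that each vertex lies on at most one linking edge, so deleting a vertex leaves nonempty cliques joined along a spanning path.
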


\begin{proof}
Each vertex is incident with at most one linking edge. Deleting a
vertex leaves every clique nonempty and removes at most one linking
edge; the remaining linking edges connect the cliques along a
spanning path. Thus the graph remains connected. An independent set
uses at most one vertex from each clique, while
$\{x_i:0\le i<\alpha\}$ is an independent set of size $\alpha$.
\end{proof}

\begin{lemma}\label{lem:canonical-cycle}
Every cycle-linking of $T(n,\alpha)$ in \cref{prop:BJ} is isomorphic
to $C_q(w)$ for a cyclic word $w$ encoding its clique orders.
\end{lemma}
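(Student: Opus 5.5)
The plan is to unwind the definition of a cycle-linking and show that, once the clique sizes are fixed in cyclic order, the remaining choices do not affect the isomorphism type. Let $G$ be a cycle-linking of $T(n,\alpha)$ as in \cref{prop:BJ}. By definition its vertex set is partitioned into cliques $D_0,\ldots,D_{\alpha-1}$ of $T(n,\alpha)$, indexed in the cyclic order used for the linking. For each $i\in\mathbb Z/\alpha\mathbb Z$ there is exactly one linking edge between $D_i$ and $D_{i+1}$; write it as $y_i'x_{i+1}'$ with $y_i'\in D_i$ and $x_{i+1}'\in D_{i+1}$. The defining condition that the two linking edges at each clique use distinct attachment vertices says exactly that $x_i'\ne y_i'$ for every $i$. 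Since $n\ge3\alpha$, we have $q\ge3\ge2$. The clique orders of $T(n,\alpha)$ are therefore $q$ or $q+1$, with exactly $\rho$ of them equal to $q+1$. Hence $w_i:=|D_i|-q\in\{0,1\}$ defines a cyclic binary word $w$ with $\rho$ ones.

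Next build $C_q(w)$ with the same indexing, with cliques $C_i$ of order $s_i=q+w_i=|D_i|$ and attachment vertices $x_i\ne y_i$. For each $i$, choose a bijection $\varphi_i\colon D_i\to C_i$ with $\varphi_i(x_i')=x_i$ and $\varphi_i(y_i')=y_i$. This is possible because both pairs consist of two distinct vertices and $|D_i|=|C_i|$. Let $\varphi$ be the union of the $\varphi_i$. Then $\varphi$ maps edges inside each $D_i$ bijectively to edges inside $C_i$, since both are complete. It also sends the linking edge $y_i'x_{i+1}'$ to $y_ix_{i+1}$. The graph $G$ has no other edges, because $T(n,\alpha)$ has no edges between cliques and $G$ has exactly the $\alpha$ linking edges, which matches the edge count $t(n,\alpha)+\alpha$. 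So $\varphi$ is an isomorphism $G\cong C_q(w)$.

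The only delicate point is the case $\alpha=3$, or more generally the need to confirm that ``consecutive'' really gives one edge per consecutive pair, with no multiplicity or ambiguity. For $\alpha\ge3$ the $\alpha$ consecutive pairs in the cyclic order are distinct unordered pairs, so the linking edges are well defined and there are exactly $\alpha$ of them. The remaining freedom lies in the choice of starting index and the direction of the cyclic arrangement. This changes $w$ only by a rotation or reversal, which is harmless because $w$ is considered as a cyclic word and, as noted, rotations and reversals of $w$ give isomorphic graphs $C_q(w)$.
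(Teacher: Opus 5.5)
Your proof is correct, but it takes a different route from the paper's. You take the cyclic arrangement, the single edge per consecutive pair, and the distinct-attachment condition directly from the definition of a cycle-linking in the introduction. Your work then goes into the explicit clique-by-clique bijections $\varphi_i$ that match the attachment pairs. That isomorphism step is exactly what the paper leaves to its earlier remark that all attachment choices give isomorphic graphs.

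The paper instead derives these structural features from $2$-connectivity. It contracts the cliques to a connected loopless multigraph with $\alpha$ vertices and $\alpha$ edges. This multigraph is bridgeless because a bridge would be a bridge of $G$, so it is $C_\alpha$ (using $\alpha\ge3$). Distinct attachment vertices follow because a shared endpoint would be a cut vertex.

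The paper's argument therefore covers any $2$-connected graph obtained from $T(n,\alpha)$ by adding $\alpha$ edges. So it does not depend on how the cited Bougard--Joret classification phrases its equality case. Yours is self-contained given the paper's own definition and makes the isomorphism explicit rather than citing it.
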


\begin{proof}
Contract the clique components, retaining the $\alpha$ added edges.
The quotient is a connected loopless multigraph with $\alpha$
vertices and $\alpha$ edges. It has no bridge, since a bridge would
also be a bridge of the original graph. It is therefore a unicyclic
multigraph consisting entirely of its cycle; as $\alpha\ge3$, it is
$C_\alpha$. The two linking edges at a clique must have distinct
endpoints there, since otherwise their common endpoint would be a
cut vertex. Ordering the cliques around the quotient cycle gives the
required representation.
\end{proof}

\subsection{An exact inclusion--exclusion formula}

Let $\cM(F)$ be the set of matchings of a graph $F$, and let
$\cM_j(F)$ consist of its $j$-edge matchings. Write $V(M)$ for the
vertices covered by $M$. For variables $z_1,\ldots,z_m$, let
\[
 e_k(z_1,\ldots,z_m)
 :=\sum_{1\le i_1<\cdots<i_k\le m}z_{i_1}\cdots z_{i_k},
\]
with $e_0=1$ and $e_k=0$ for $k<0$ or $k>m$.

Inclusion--exclusion on the linking edges gives the following
expression for the independence polynomial in terms of matchings
of the quotient cycle.
\begin{proposition}\label{prop:matching-formula}
For every cyclic word $w$ under consideration,
\begin{equation}\label{eq:matching-polynomial}
 I(C_q(w);x)
 =\sum_{M\in\cM(C_\alpha)}(-x^2)^{|M|}
       \prod_{i\notin V(M)}(1+s_ix).
\end{equation}
Consequently, for $0\le\beta\le\alpha$,
\begin{equation}\label{eq:coefficient-matching}
 i_\beta(C_q(w))
 =\sum_{j=0}^{\floor{\beta/2}}(-1)^j
   \sum_{M\in\cM_j(C_\alpha)}
       e_{\beta-2j}(s_i:i\notin V(M)).
\end{equation}
\end{proposition}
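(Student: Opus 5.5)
The plan is inclusion--exclusion over the $\alpha$ linking edges $L=\{y_ix_{i+1}: i\in\mathbb Z/\alpha\mathbb Z\}$. First, for the disjoint union of cliques $G_0=C_0\sqcup\cdots\sqcup C_{\alpha-1}$ (all linking edges removed), an independent set picks at most one vertex from each clique. Hence
\[
 I(G_0;x)=\prod_{i}(1+s_ix).
\]
An independent set of $C_q(w)$ is an independent set of $G_0$ that contains no linking edge. For a set $S$ of independent sets of $G_0$, inclusion--exclusion gives
\[
 I(C_q(w);x)=\sum_{F\subseteq L}(-1)^{|F|}\sum_{A\supseteq V(F)}x^{|A|},
\]
where the inner sum runs over independent sets $A$ of $G_0$ containing both endpoints of every edge of $F$.

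Next I handle a fixed $F\subseteq L$, viewed as a set of edges of the quotient cycle $C_\alpha$. Suppose two edges of $F$ share a quotient vertex $i$. These are $y_{i-1}x_i$ and $y_ix_{i+1}$, so $A$ would need both $x_i$ and $y_i$. Since $x_i\ne y_i$ lie in the clique $C_i$, this is impossible, and the term vanishes. So only $F$ forming a matching $M$ of $C_\alpha$ contribute. For such $M$, the set $A$ is forced to contain the prescribed endpoint in each covered clique. This is allowed because the endpoints lie in distinct cliques and each covered clique receives exactly one forced vertex. Apart from the forced vertices, $A$ is free to take at most one vertex from each uncovered clique. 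Thus the inner sum equals
\[
 x^{2|M|}\prod_{i\notin V(M)}(1+s_ix),
\]
and the sign $(-1)^{|M|}$ yields \eqref{eq:matching-polynomial}. The case $\alpha\ge3$ guarantees that $C_\alpha$ is a simple cycle, so no parallel-edge issues arise.

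For \eqref{eq:coefficient-matching}, extract the coefficient of $x^\beta$. A $j$-edge matching contributes $(-1)^j$ times the coefficient of $x^{\beta-2j}$ in $\prod_{i\notin V(M)}(1+s_ix)$. That coefficient is $e_{\beta-2j}(s_i:i\notin V(M))$. Terms with $2j>\beta$ vanish by the convention $e_k=0$ for $k<0$.

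The only delicate point is the vanishing of non-matching terms, which relies on the distinct attachment vertices $x_i\ne y_i$. Everything else is routine bookkeeping.
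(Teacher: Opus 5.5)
Your proposal is correct and follows the paper's proof essentially step for step. You use inclusion--exclusion over the linking edges, and intersections vanish unless the chosen edges form a matching, because adjacent quotient edges would force both $x_i\ne y_i$ in one clique. Each matching then contributes $(-x^2)^{|M|}\prod_{i\notin V(M)}(1+s_ix)$, and extracting the coefficient of $x^\beta$ gives the second formula.
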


\begin{proof}
Let $G_0$ be the disjoint union of the cliques before the linking
edges are added. Then $I(G_0;x)=\prod_i(1+s_ix)$. For
$e_i=y_ix_{i+1}$, let $\mathcal A_i$ consist of the independent sets
of $G_0$ containing both endpoints of $e_i$. The independent sets of
$C_q(w)$ are exactly those belonging to none of these families.

Identify linking edges with the corresponding edges of the quotient
cycle. An intersection of the families $\mathcal A_i$ is nonempty
exactly when the corresponding edges form a matching. Indeed,
adjacent quotient edges force two distinct vertices in their common
clique to be selected. Conversely, the endpoints prescribed by a
matching lie in distinct cliques and can be selected simultaneously.
For a matching $M$, these endpoints fix $2|M|$ vertices, and hence
\[
 \sum_{S\in\bigcap_{e_i\in M}\mathcal A_i}x^{|S|}
 =x^{2|M|}\prod_{i\notin V(M)}(1+s_ix).
\]
For $M=\varnothing$, the intersection means all independent sets of
$G_0$. Weighted inclusion--exclusion now gives
\eqref{eq:matching-polynomial}. Taking the coefficient of $x^\beta$
gives \eqref{eq:coefficient-matching}.
\end{proof}

\section{Optimal cyclic orders}
\label{sec:block-reversals}
\label{sec:local-balance}

Fix $\alpha\ge3$, $q\ge2$, and $0\le\rho<\alpha$. We compare the
cycle-linkings from \cref{sec:cycle-linkings} as their clique-size
words range over cyclic binary words of length $\alpha$ with $\rho$
ones. Our switch is chosen from a shortest imbalance. Its unchanged
boundary segments preserve short-factor weights, while a polynomial
identity determines exactly which independent-set counts increase.

\subsection{Clique-path identities}

For a linear binary word $u=u_1\cdots u_m$, let $L_q(u)$ be the
clique-path with successive clique orders $q+u_1,\ldots,q+u_m$,
using distinct attachment vertices at each internal clique. The empty
word $\varepsilon$ represents the empty graph. Put
$P_u(x):=I(L_q(u);x)$, so $P_\varepsilon(x)=1$.
For nonempty $u$, let ${}^-u$ and $u^-$ denote deletion of its first
and last letters, respectively, and define
\begin{equation}\label{eq:D-def}
 D_u(x):=P_{{}^-u}(x)-P_{u^-}(x),
 \qquad D_\varepsilon(x):=0.
\end{equation}
Write $u^R$ for reversal and call $u$ a \emph{palindrome} if $u=u^R$.
A factor is a contiguous subword; a prefix or suffix is a factor at
the beginning or end of a word.
For $|u|\ge2$, write $u^\circ$ for deletion of both end letters;
thus $P_{u^\circ}=1$ when $|u|=2$. We write $|u|_1$ for the number
of ones in $u$ and occasionally suppress the variable $x$.

We first record the basic path recurrences and positivity properties
needed for the switching argument.
\begin{lemma}
\label{lem:path-facts}
For $b\in\{0,1\}$, one has $P_b(x)=1+(q+b)x$. If $u\ne\varepsilon$,
then
\begin{equation}\label{eq:path-recurrence}
 P_{ub}(x)=\bigl(1+(q+b)x\bigr)P_u(x)-x^2P_{u^-}(x).
\end{equation}
For every word $u$, one has $P_{u^R}=P_u$, and for
$a,b\in\{0,1\}$,
\begin{equation}\label{eq:D-rec}
 D_{aub}(x)=(b-a)xP_u(x)+x^2D_u(x).
\end{equation}
Both $P_u+xD_u$ and $P_u-xD_u$ have constant coefficient $1$,
strictly positive coefficients in all degrees $0,\ldots,|u|$, and
no terms of higher degree.
\end{lemma}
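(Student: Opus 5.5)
We will prove the four assertions in order: the formula for $P_b$, the path recurrence \eqref{eq:path-recurrence}, the reversal symmetry together with \eqref{eq:D-rec}, and finally the positivity statement. Each one feeds into the next.

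\emph{Base case and recurrence.} For a single letter, $L_q(b)$ is the clique $K_{q+b}$. Its independent sets are the empty set and the singletons, so $P_b=1+(q+b)x$.

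For \eqref{eq:path-recurrence}, let $C$ be the last clique of $L_q(ub)$. Let $e$ be the linking edge joining $C$ to the last clique of $L_q(u)$. First count independent sets of $L_q(u)\sqcup C$ while ignoring $e$; this gives $(1+(q+b)x)P_u$. Then subtract the sets that contain both endpoints of $e$. Such a set uses the endpoint $y$ in the last clique of $u$. Hence it avoids the rest of that clique and the other neighbour of $y$. Since attachment vertices at internal cliques are distinct, $y$ has no neighbour in $L_q(u^-)$, so the rest of the set is an arbitrary independent set of $L_q(u^-)$. This gives the correction $x^2P_{u^-}$. The boundary case $|u|=1$ works with $P_\varepsilon=1$.

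\emph{Reversal and the $D$-recurrence.} $L_q(u^R)$ is the same graph as $L_q(u)$ read backwards, so $P_{u^R}=P_u$. Consequently the recurrence also holds at the front: $P_{au}=(1+(q+a)x)P_u-x^2P_{{}^-u}$.

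For \eqref{eq:D-rec} we have $D_{aub}=P_{ub}-P_{au}$. Expand $P_{ub}$ by the back recurrence and $P_{au}$ by the front recurrence. The terms $(1+qx)P_u$ cancel, leaving $(b-a)xP_u-x^2(P_{u^-}-P_{{}^-u})=(b-a)xP_u+x^2D_u$. For $u=\varepsilon$ the same computation applies, using $P_a=1+(q+a)x$ and $D_\varepsilon=0$. One point needs care: the front recurrence for nonempty $u$ requires $|u|\ge1$. When $|u|=1$, the relevant quantities are $P_{u^-}=P_{{}^-u}=1$, consistent with $D_b=0$.

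\emph{Positivity.} This is the main obstacle. Our first attempt is a combinatorial interpretation. The quantity $P_u\pm xD_u$ equals $P_u\pm x(P_{{}^-u}-P_{u^-})$. Rewrite $P_u-xP_{u^-}$ as a count over independent sets of $L_q(u)$ according to whether they meet the last clique. Writing $s_m=q+u_m$ for the size of the last clique, we get
\[
P_u=P_{u^-}+x\bigl(s_mP_{u^-}-xP_{u^{--}}\bigr).
\]
This route looks messy, so we would rather run an induction with \eqref{eq:D-rec}. Write $u=avb$. Then
\[
P_u\pm xD_u=P_u\pm(b-a)x^2P_v\pm x^3D_v.
\]
Expand $P_u$ via both recurrences down to $v$:
\[
P_{avb}=(1+s_ax)(1+s_bx)P_v-x^2(1+s_ax)P_{{}^-vb}\ \text{-type terms}.
\]
It is cleaner to use
\[
P_u+xD_u=(1+s_1x)P_{{}^-u}-x^2P_{{}^{--}u}+x(P_{{}^-u}-P_{u^-}),
\]
and we expect to regroup this as
\[
(1+(s_1+1)x)P_{{}^-u}-xP_{u^-}-x^2P_{{}^{--}u}.
\]
Our fallback is a direct interpretation that makes positivity evident. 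Up to sign, $xD_u$ is $x$ times the difference between the independent sets avoiding the first clique and those avoiding the last clique. Thus $P_u+xD_u$ should count independent sets of $L_q(u)$ in which the last clique's "free" vertices are reweighted, that is, an independence polynomial of the clique-path obtained by enlarging the first clique by one and shrinking the last by one. More precisely, we conjecture
\[
P_u\pm xD_u=I\bigl(L'_q(u);x\bigr),
\]
where $L'_q(u)$ has end cliques of sizes $s_1\mp\dots$; we would verify this identity via \eqref{eq:path-recurrence}. Once it holds, positivity follows. The constant term is $1$. The top degree is $|u|$, with a positive coefficient because each clique still has size at least $q-1\ge1$, so an independent set meeting all $|u|$ cliques exists (take distinct non-attachment vertices where possible). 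No higher terms appear because independent sets meet each clique at most once. The $\pm$ symmetry comes from reversal, since $D_{u^R}=-D_u$.
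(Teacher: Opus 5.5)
Your derivations of $P_b$, of \eqref{eq:path-recurrence}, of $P_{u^R}=P_u$ and of \eqref{eq:D-rec} are correct and match the paper's. The gap is the positivity statement, which the proposal never proves. Your three rewritings end in expressions that still contain negative terms, so they give no sign information. The fallback identity $P_u\pm xD_u=I(L'_q(u);x)$ is false for every $|u|\ge2$ under its natural reading. Write $s_1,s_m$ for the two end-clique orders. Then $P_u$ is affine in each of them, with $\partial_{s_1}P_u=xP_{{}^-u}$, $\partial_{s_m}P_u=xP_{u^-}$, and mixed coefficient $x^2P_{u^\circ}$. Hence the clique-path with end orders $s_1+1$ and $s_m-1$ has independence polynomial $P_u+xD_u-x^2P_{u^\circ}$, not $P_u+xD_u$. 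For example, with $u=ab$ the modified path gives $1+(s_1+s_2)x+(s_1s_2+s_2-s_1-2)x^2$. By contrast, $P_u+xD_u$ has $x^2$-coefficient $s_1s_2+s_2-s_1-1$.

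The missing idea is close to your first attempt. Because $q\ge2$, the last clique of $L_q(u)$ contains a vertex $v$ with no neighbour outside that clique. The independent sets containing $v$ are exactly $\{v\}\cup S$ with $S$ independent in $L_q(u^-)$, so they are counted by $xP_{u^-}$. Thus $P_u-xP_{u^-}$ counts the sets avoiding $v$, and it has nonnegative coefficients and constant term $1$. Then
\[
 P_u+xD_u=(P_u-xP_{u^-})+xP_{{}^-u}.
\]
The summand $xP_{{}^-u}$ is strictly positive in degrees $1,\ldots,|u|$. Indeed, $L_q({}^-u)$ has an independent transversal: choose vertices clique by clique, with at most one forbidden vertex at each step. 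Its subsets then realise every size. The case $P_u-xD_u$ follows symmetrically, or from your correct observation that $D_{u^R}=-D_u$ and $P_{u^R}=P_u$. Your construction can also be rescued through the corrected identity $P_u+xD_u=I(L'_q(u);x)+x^2P_{u^\circ}$. Either way, you need the subsets of a transversal to get positivity in every intermediate degree, not just a positive top coefficient.
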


\begin{proof}
The one-letter formula is immediate. Appending a clique to $L_q(u)$
first gives the disjoint-union contribution
$(1+(q+b)x)P_u$. Adding the new linking edge invalidates precisely
the sets containing its two endpoints. The endpoint in the old
terminal clique is distinct from its attachment toward the preceding
clique, when such a clique exists. Thus the remaining choices are
counted by $P_{u^-}$, proving \eqref{eq:path-recurrence}.
Reversing the path gives an isomorphic graph. Applying the recurrence
at both ends yields
\[
 D_{aub}=P_{ub}-P_{au}
        =(b-a)xP_u+x^2(P_{{}^-u}-P_{u^-}).
\]
When $u=\varepsilon$, \eqref{eq:D-rec} follows directly from
$D_{ab}=P_b-P_a=(b-a)x$.

For positivity, assume $u\ne\varepsilon$. Choose a vertex in the last
clique with no neighbor outside that clique; this is possible since
$q\ge2$. The independent sets containing it are counted by
$xP_{u^-}$, so $P_u-xP_{u^-}$ has nonnegative coefficients.
The analogous argument at the other end gives the same conclusion
for $P_u-xP_{{}^-u}$. Now
\[
 \begin{aligned}
 P_u+xD_u&=(P_u-xP_{u^-})+xP_{{}^-u},\\
 P_u-xD_u&=(P_u-xP_{{}^-u})+xP_{u^-}.
 \end{aligned}
\]
Every clique-path has an independent transversal: choose vertices
successively along the path, with at most one vertex forbidden at
each step. Its subsets show that a path with $m$ cliques has a
positive coefficient in every degree $0,\ldots,m$.
Consequently the second summands in the displayed decompositions
supply strict positivity in degrees $1,\ldots,|u|$; the constant
coefficient is $1$. No higher degree is possible, since
$\deg P_u\le |u|$ and $\deg D_u\le |u|-1$.
The empty-word case is immediate from
$P_\varepsilon=1$ and $D_\varepsilon=0$.
\end{proof}

We next record the basic comparison identity for reversing one block
of a cyclic word. It reduces the change in the independence polynomial
to the endpoint asymmetries of the two resulting paths.
\begin{lemma}\label{lem:block-reversal}
If $w=UV$ with $|U|,|V|\ge2$, then
\begin{equation}\label{eq:block-reversal}
 I(C_q(U^RV);x)-I(C_q(UV);x)=-x^2D_U(x)D_V(x).
\end{equation}
\end{lemma}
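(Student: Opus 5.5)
The plan is to cut the cycle $C_q(UV)$ at the two linking edges joining the block $U$ to the block $V$, and then apply inclusion--exclusion on just these two edges, in the same way as in the proof of \cref{prop:matching-formula}. Write $U=u_1\cdots u_k$ and $V=v_1\cdots v_l$ with $k,l\ge2$. Deleting the edge $e_1$ (between the cliques of $u_k$ and $v_1$) and the edge $e_2$ (between the cliques of $v_l$ and $u_1$) leaves the disjoint union $L_q(U)\sqcup L_q(V)$. Its independence polynomial is $P_UP_V$.

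The first step is to identify the correction terms.

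\emph{Sets containing both endpoints of $e_1$.} Its endpoint in the $u_k$-clique is distinct from the attachment vertex toward the $u_{k-1}$-clique. Likewise, its endpoint in the $v_1$-clique is distinct from the attachment vertex toward the $v_2$-clique. So these independent sets are counted by $x^2P_{U^-}P_{{}^-V}$, by the same reasoning as for \eqref{eq:path-recurrence}. Symmetrically, $e_2$ contributes $x^2P_{{}^-U}P_{V^-}$.

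\emph{Sets containing all four endpoints.} Because $k,l\ge2$, the four endpoints lie in four distinct cliques. Each of them is distinct from the internal attachment vertices of its own clique, including the case $k=2$ or $l=2$. Hence these sets are counted by $x^4P_{U^\circ}P_{V^\circ}$. Altogether,
\[
 I(C_q(UV))=P_UP_V-x^2P_{U^-}P_{{}^-V}-x^2P_{{}^-U}P_{V^-}+x^4P_{U^\circ}P_{V^\circ}.
\]

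Next we apply the same formula to $U^RV$. By \cref{lem:path-facts}, $P_{U^R}=P_U$ and $P_{(U^R)^\circ}=P_{(U^\circ)^R}=P_{U^\circ}$. Moreover, $(U^R)^-=({}^-U)^R$ and ${}^-(U^R)=(U^-)^R$, so the roles of $P_{U^-}$ and $P_{{}^-U}$ are simply exchanged. Subtracting the two expressions, the first and last terms cancel, and we obtain
\[
 -x^2P_{{}^-V}\bigl(P_{{}^-U}-P_{U^-}\bigr)-x^2P_{V^-}\bigl(P_{U^-}-P_{{}^-U}\bigr)
 =-x^2D_U\bigl(P_{{}^-V}-P_{V^-}\bigr)=-x^2D_UD_V,
\]
which is \eqref{eq:block-reversal}.

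The only delicate point is the bookkeeping of attachment vertices in the inclusion--exclusion step. We must check that requiring both endpoints of a linking edge removes exactly one end clique from the relevant path and imposes no further constraint. This is where the distinct-attachment convention and the hypothesis $|U|,|V|\ge2$ are used. We also use the fact that $C_q(\cdot)$ is well defined up to isomorphism, so the attachment choices in $U^RV$ may be taken to be the reflected ones.
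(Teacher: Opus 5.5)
Your proposal is correct and follows the paper's proof essentially verbatim. You cut the two boundary linking edges and write the two-edge inclusion--exclusion formula in terms of $P_U,P_V,P_{U^-},P_{{}^-U},P_{V^-},P_{{}^-V},P_{U^\circ},P_{V^\circ}$. You then note that reversing $U$ swaps $P_{U^-}$ and $P_{{}^-U}$ while fixing $P_U$ and $P_{U^\circ}$, so the difference factors as $-x^2D_UD_V$, and your attachment-vertex check (including $|U|=2$ or $|V|=2$) matches the paper's remark on the distinct-attachment convention.
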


\begin{proof}
Delete the two linking edges between the $U$- and $V$-blocks, leaving
$L_q(U)\sqcup L_q(V)$. Inclusion--exclusion on the two deleted edges
gives
\begin{equation}\label{eq:cycle-two-boundaries}
 \begin{aligned}
 I(C_q(UV);x)
 ={}&P_UP_V-x^2\bigl(P_{U^-}P_{{}^-V}+P_{{}^-U}P_{V^-}\bigr)\\
 &+x^4P_{U^\circ}P_{V^\circ}.
 \end{aligned}
\end{equation}
The negative terms prescribe the endpoints of one boundary edge;
the positive correction prescribes all four endpoints. The
distinct-attachment convention ensures that the prescribed vertices
have no neighbors in the remaining paths and are mutually independent
before the boundary edges are restored. This also holds for a block
of length two: its internal linking edge uses different attachment
vertices from the prescribed boundary vertices.

Reversing $U$ preserves $P_U$ and $P_{U^\circ}$ and exchanges
$P_{U^-}$ with $P_{{}^-U}$. Subtracting the two versions of
\eqref{eq:cycle-two-boundaries} therefore gives
$-x^2(P_{{}^-U}-P_{U^-})(P_{{}^-V}-P_{V^-})=-x^2D_UD_V$.
\end{proof}

\subsection{Shortest imbalances and the balancing switch}

The palindrome obstruction to binary balance is classical; see the
proof of \cite[Proposition~4.4]{DeLucaZamboni2021}. We include the
argument to obtain the precise length and disjointness needed by our
switch.

\begin{lemma}\label{lem:first-imbalance}
Let $w$ be a nonbalanced cyclic binary word of length $\alpha$, and
let $r$ be the least length at which balance fails. Then
$
2\le r\le\lfloor\alpha/2\rfloor,
$
and $w$ has disjoint cyclic occurrences of $0z0$ and $1z1$, where
$z=z^R$ and $|z|=r-2$. Consequently, for some possibly empty words
$A,B$,
\begin{equation}\label{eq:obstruction-decomposition}
w\equiv 0z0\,A\,1z1\,B\equiv (z0A1z)(1B0).
\end{equation}
\end{lemma}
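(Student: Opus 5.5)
We work in the periodic extension of $w$. Since $r$ is the least length at which balance fails, there are two cyclic factors $X,Y$ of length $r$ with $|X|_1-|Y|_1\ge2$. Length $1$ is always balanced, because the two letters differ by at most one. So $r\ge2$. Next delete the first letter of $X$ and of $Y$; the resulting factors of length $r-1$ differ in weight by at most one, by minimality. The same holds after deleting the last letters. Since the total difference is at least $2$, we get $|X|_1-|Y|_1=2$, $X$ begins and ends with $1$, and $Y$ begins and ends with $0$. Write $X=1x1$ and $Y=0y0$ with $|x|=|y|=r-2$ and $|x|_1=|y|_1$.

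The palindrome step is to show that we may choose $x=y=z$ with $z=z^R$. Follow the classical argument (De Luca--Zamboni): among all such imbalanced pairs of length $r$, compare $x$ and $y$ letter by letter from the left. Suppose $x_i\ne y_i$ at a first position $i$, say $x_i=0$ and $y_i=1$. Then the prefix $1x_1\cdots x_i$ of $X$ and the prefix $0y_1\cdots y_i$ of $Y$ have equal length less than $r$. Their weights agree except that the first letters contribute $1$ versus $0$ and position $i$ contributes $0$ versus $1$, so the weights are equal. The complementary suffixes therefore differ by $2$ but have length less than $r$, contradicting minimality. If instead $x_i=1$ and $y_i=0$, then the prefixes themselves differ by $2$, again a contradiction. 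Hence $x=y=:z$. Applying the same comparison from the right to the pair $1z1$, $0z0$, one sees that reading $z$ reversed must also be compatible; more carefully, the standard argument compares $1z1$ with the reversal of $0z0$. The main obstacle is getting $z=z^R$ rigorously, since equality of $x$ and $y$ alone does not give it. To handle this I would use minimality on factors of $z1$ and $0z$, or equivalently rely on the structure of minimal imbalances as presented in the cited proof.

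Disjointness comes next. Two occurrences of length $r$ that overlap on cyclic positions would share a common factor. Stripping that common part leaves shorter factors whose weights differ by $2$, which contradicts minimality. Here the overlap has to be handled as a shift: if $1z1$ starts $d<r$ positions after $0z0$, compare the nonoverlapping parts of length $d$. The two occurrences also cannot wrap around each other, because together they would need $2r>\alpha$ positions. Disjointness of the two occurrences therefore forces $2r\le\alpha$, which gives $r\le\lfloor\alpha/2\rfloor$. Again the subtle point is ruling out every overlap, including the overlap cyclically on the other side.

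Finally, the occurrences are disjoint, so $w$ is cyclically $0z0\,A\,1z1\,B$, where $A$ and $B$ are the possibly empty gaps between them. Rotating this word to start after the initial $0$ gives $z0A1z1B0$, which we group as $(z0A1z)(1B0)$. This is \eqref{eq:obstruction-decomposition}.
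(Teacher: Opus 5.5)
\textbf{Gap: the palindrome property is never proved.} The lemma asserts $z=z^R$. You rightly note that $x=y$ does not imply it, but you then only point to ``minimality on factors of $z1$ and $0z$'' or to the cited proof; nothing you wrote establishes $z=z^R$. The property matters downstream: it is what makes $U^R=z1A^R0z$ in \cref{lem:balancing-switch}, and the paper also uses it to prove disjointness.

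The missing argument is short. If $z\ne z^R$, compare $z$ with $z^R$ up to their first mismatch. This gives $z=p0c1p^R$ or $z=p1c0p^R$ for some words $p,c$, so $|p|+2\le|z|<r$. In the first case, the prefix $0p0$ of $0z0$ and the suffix $1p^R1$ of $1z1$ are factors of length $|p|+2$ whose weights differ by $2$. In the second case, use the prefix $1p1$ of $1z1$ and the suffix $0p^R0$ of $0z0$. Either way the minimality of $r$ fails. This is exactly your suggestion of comparing $1z1$ with the reversal of $0z0$, made rigorous: each prefix of $0z^R0$ is the reversal of a suffix of $0z0$, so it has the same weight as a genuine factor of $w$.

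\textbf{The rest, and your different disjointness route.} Your other steps agree with the paper: $r\ge2$, the shapes $1x1$ and $0y0$ with weight difference exactly $2$, $x=y$ via the first mismatch, and the final rotation. Your disjointness argument is valid and genuinely different. If the occurrences overlap with offset $1\le d<r$, the shared overlap cancels, so a length-$d$ prefix of one occurrence and a length-$d$ suffix of the other differ in weight by $2$.

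The paper proceeds differently. It first proves $r<\alpha$ by reducing lengths modulo $\alpha$, then $r\le\lfloor\alpha/2\rfloor$ via complementary intervals, and then uses $z=z^R$ to exclude a suffix--prefix overlap. Your route needs neither palindromicity nor an a priori bound on $r$, and it gives $2r\le\alpha$ for free.

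One sentence needs repair. Saying the occurrences ``cannot wrap around each other, because together they would need $2r>\alpha$ positions'' is circular as a justification. State instead: in the periodic extension, let $d\in\{0,\ldots,\alpha-1\}$ be the offset between the starting positions. Then $d\ne0$ because the first letters differ. If $d\le r-1$ or $\alpha-d\le r-1$, your shift argument applies in the corresponding direction. Hence $r\le d\le\alpha-r$, which gives both $2r\le\alpha$ and disjointness.
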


\begin{proof}
Length-one factors are balanced, so $r\ge2$. Every factor length can
be written as $a\alpha+s$, where $0\le s<\alpha$. Such a factor has
weight $a|w|_1$ plus the weight of its remaining length-$s$ factor.
Hence any imbalance at length $a\alpha+s$ already occurs at length
$s$, and therefore $r<\alpha$. If $r>\alpha/2$, the complementary
intervals of two length-$r$ factors witnessing an imbalance have
length $\alpha-r<r$ and the same absolute weight difference,
contradicting the minimality of $r$. Thus
$
2\le r\le\lfloor\alpha/2\rfloor.
$

Choose length-$r$ factors
$
u=u_1\cdots u_r,$$v=v_1\cdots v_r
$
such that
$
\delta:=|v|_1-|u|_1\ge2.
$
Since $r$ is the least imbalance length, the length-$(r-1)$ factors
obtained by deleting the first letters have weights differing by at
most one. Hence
\[
\delta-(v_1-u_1)
=
|v_2\cdots v_r|_1-|u_2\cdots u_r|_1
\le1.
\]
Since $\delta\ge2$, this gives $v_1-u_1\ge1$. As
$u_1,v_1\in\{0,1\}$, necessarily
$
u_1=0,v_1=1.
$
Substituting back gives $\delta\le2$, and hence $\delta=2$.
Applying the same argument after deleting the last letters yields
$
u_r=0,v_r=1.
$
Thus
$
u=0a0, v=1b1,
$
where $|a|=|b|=r-2$ and $|a|_1=|b|_1$.

We next show that $a=b$. Suppose otherwise, and let $p$ be their
maximal common prefix. If
$
a=p0a', b=p1b',
$
then $0p0$ and $1p1$ are equal-length factors of $w$ whose weights
differ by two, contradicting the minimality of $r$. If instead
$
a=p1a', b=p0b',
$
then $|a|_1=|b|_1$ implies
$
|b'|_1=|a'|_1+1.
$
Consequently, the equal-length factors $a'0$ and $b'1$ have weights
differing by two, again at a length smaller than $r$. Therefore
$a=b=:z$.

We claim that $z$ is a palindrome. Otherwise, comparing $z$ from
the two ends up to the first mismatch gives either
$
z=p0c1p^R
$ or $
z=p1c0p^R
$
for some words $p,c$. In the first case, the prefix $0p0$ of $0z0$
and the suffix $1p^R1$ of $1z1$ have weights differing by two; in
the second case, the same is true of $1p1$ and $0p^R0$. In either
case these factors have length $|p|+2<r$, contradicting the
minimality of $r$. Hence $z=z^R$.

It remains to show that the occurrences of $0z0$ and $1z1$ are
disjoint. They clearly cannot coincide. Suppose they overlap
properly. Since $2r\le\alpha$, there is a position outside their
union; cutting the cyclic word there turns the two occurrences into
overlapping linear intervals. Thus, for some $1\le h<r$, a suffix
of one occurrence equals a prefix of the other.

Suppose first that a length-$h$ suffix of $0z0$ equals a length-$h$
prefix of $1z1$. The former has weight equal to that of the last
$h-1$ letters of $z$, whereas the latter has weight one more than
that of the first $h-1$ letters of $z$. Since $z=z^R$, these two
parts of $z$ have the same weight, a contradiction. The opposite
overlap is excluded in the same way. Hence the two occurrences are
disjoint.

Their disjointness gives
$
w\equiv 0z0\,A\,1z1\,B
$
for some possibly empty words $A,B$. Rotating the cyclic word by
moving its initial $0$ to the end gives
$
w\equiv (z0A1z)(1B0),
$
which proves \eqref{eq:obstruction-decomposition}.
\end{proof}

For a cyclic word, its length-$s$ \emph{factor-weight multiset}
records the number of ones in the factor starting at each position.
Occurrences are counted separately even when the corresponding
factors are equal as words.
The decomposition in Lemma~\ref{lem:first-imbalance} allows us to
turn a shortest imbalance into a block reversal. The next lemma
shows that this reversal produces a precise coefficientwise
improvement.

\begin{lemma}\label{lem:balancing-switch}
Suppose $w=(z0A1z)(1B0)$, where $z=z^R$ and $A,B,z$ may be empty.
Put $r=|z|+2$ and $w^\star=(z1A^R0z)(1B0)$. Then
\begin{equation}\label{eq:special-switch-gain}
 I(C_q(w^\star);x)-I(C_q(w);x)
 =x^{2r}(P_A+xD_A)(P_B-xD_B).
\end{equation}
The difference has strictly positive coefficients in exactly the
degrees $2r,\ldots,\alpha$. Moreover, for every $1\le s<r$, the
length-$s$ factor-weight multisets of $w$ and $w^\star$ coincide.
\end{lemma}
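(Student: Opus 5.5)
The plan is to derive \eqref{eq:special-switch-gain} as a direct instance of \cref{lem:block-reversal}, with the block structure supplied by the decomposition itself. Put $U=z0A1z$ and $V=1B0$, so $|U|=2|z|+2+|A|\ge2$ and $|V|=|B|+2\ge2$. Since $z=z^R$, we get $U^R=z^R1A^R0z^R=z1A^R0z$, and hence $w^\star=U^RV$. \cref{lem:block-reversal} then gives
\[
 I(C_q(w^\star);x)-I(C_q(w);x)=-x^2D_U(x)D_V(x).
\]
It remains to evaluate $D_U$ and $D_V$ using \eqref{eq:D-rec}.

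For $V$, \eqref{eq:D-rec} with $a=1$, $b=0$ gives $D_{1B0}=-xP_B+x^2D_B=-x(P_B-xD_B)$.

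For $U$, I first peel the palindrome $z$ from both ends. A nonempty palindrome of length at least two has the form $z=czc'$ with $z'$ again a palindrome and equal end letters $c$. Hence, by \eqref{eq:D-rec}, $D_{c\,y\,c}=0\cdot xP_y+x^2D_y=x^2D_y$ for every word $y$. A length-one palindrome is handled the same way. Inducting on $|z|$ gives $D_{z\,y\,z}=x^{2|z|}D_y$. Applying this with $y=0A1$ and then \eqref{eq:D-rec} once more yields
\[
 D_U=x^{2|z|}D_{0A1}=x^{2|z|}\bigl(xP_A+x^2D_A\bigr)=x^{2|z|+1}(P_A+xD_A).
\]
Multiplying, the signs cancel:
\[
 -x^2\cdot x^{2|z|+1}(P_A+xD_A)\cdot\bigl(-x(P_B-xD_B)\bigr)=x^{2|z|+4}(P_A+xD_A)(P_B-xD_B),
\]
and $2|z|+4=2r$. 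This proves \eqref{eq:special-switch-gain}.

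For the positivity claim I invoke the last assertion of \cref{lem:path-facts}. $P_A+xD_A$ has strictly positive coefficients exactly in degrees $0,\ldots,|A|$, and $P_B-xD_B$ has them exactly in degrees $0,\ldots,|B|$. A product of two such polynomials has strictly positive coefficients exactly in degrees $0,\ldots,|A|+|B|$. Since $\alpha=|U|+|V|=2r+|A|+|B|$, the difference is strictly positive in exactly the degrees $2r,\ldots,\alpha$.

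For the factor-weight multisets, fix $1\le s<r$, so $s-1\le|z|$. I classify each starting position of a length-$s$ cyclic factor by how the factor meets the blocks $U$ and $V$.

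\emph{Factors inside $U$.} Reversing $U$ maps the length-$s$ factors of $U$ bijectively onto those of $U^R$ and preserves weights. So the submultiset coming from these positions is unchanged.

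\emph{Factors inside $V$.} These are literally unchanged.

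\emph{Factors crossing a boundary.} A factor that crosses a $U$/$V$ boundary meets $U$ only in a prefix or a suffix of $U$ of length at most $s-1\le|z|$. This holds even if the factor contains all of $V$ and touches $U$ at both ends. It cannot contain all of $U$, because $|U|\ge r>s$. Both $U$ and $U^R$ begin and end with $z$, so each such factor is the same word in $w$ and $w^\star$, and the positions correspond.

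Combining the three cases gives a weight-preserving bijection of starting positions, so the length-$s$ multisets coincide. The main point needing care is this boundary bookkeeping, specifically the case where $V$ is short and a factor wraps over it. I expect the bound $s-1\le|z|$ to handle it uniformly. The algebraic part is routine once the palindrome peeling identity $D_{zyz}=x^{2|z|}D_y$ is noted.
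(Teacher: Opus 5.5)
Your proposal is correct and follows essentially the same route as the paper. It applies the block-reversal identity of \cref{lem:block-reversal} with $U=z0A1z$ and $V=1B0$, peels the palindromic ends of $U$ via $D_{aXa}=x^2D_X$, and gets positivity from \cref{lem:path-facts} together with $\alpha=2r+|A|+|B|$. For the multisets it uses reflection on factors inside $U$ and the identity on all other factors, which meet $U$ only within its unchanged $z$-ends.

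The only blemish is a typo in the peeling step: the palindrome decomposition should read $z=cz'c$, not $z=czc'$.
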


\begin{proof}
Set $k=|z|$, $U=z0A1z$, and $V=1B0$. Since $z=z^R$, we have
$U^R=z1A^R0z$. Both blocks have length at least two, and the operation
reverses the first clique-path before reconnecting the two boundary
edges. It preserves the word length, the number of ones, and the
distinct-attachment convention, so it remains in the same
cycle-linking family.

Equal end letters give $D_{aXa}=x^2D_X$ by \eqref{eq:D-rec}.
Removing the $k$ equal outer pairs of $U$ therefore gives
\[
 D_U=x^{2k+1}(P_A+xD_A),
 \qquad D_V=-x(P_B-xD_B).
\]
Substitution into \eqref{eq:block-reversal} proves
\eqref{eq:special-switch-gain}. By \cref{lem:path-facts}, the two
factors on the right have strictly positive coefficients throughout
degrees $0,\ldots,|A|$ and $0,\ldots,|B|$. Their product is strictly
positive throughout $0,\ldots,|A|+|B|$: in each such degree its
coefficient is a nonempty sum of positive products. Since
$\alpha=2r+|A|+|B|$, the coefficient assertion follows. Empty $A$ or
$B$ is covered by $P_\varepsilon=1$ and $D_\varepsilon=0$.

Fix $1\le s<r$, so $s\le k+1$, and keep the cyclic positions fixed.
The first $k$ and last $k$ positions of $U$ are unchanged: both
segments are $z$ before and after reversal. The whole $V$-block is
also unchanged.

For factors contained in $U$, reflection of their intervals inside
$U$ gives a weight-preserving bijection to factors in $U^R$.
A factor not contained in $U$ cannot contain all of $U$, because
$|U|\ge2k+2>s$. If it meets $U$, it also uses at least one position
in $V$ and hence at most $s-1\le k$ positions in $U$. Its portions
in $U$ form a prefix, a suffix, or their union, and each lies in the
unchanged boundary segments. Thus this factor is unchanged at the
same cyclic positions, even if it contains all of $V$. Reflection on
the first class and the identity on the second prove the multiset
assertion. The argument also covers $k=0$, when $s=1$.
\end{proof}

\subsection{Coefficientwise comparison and equality cases}

Let $b$ be the balanced cyclic word supplied by
\eqref{eq:mechanical-word}. Recall that it is unique up to rotation
\cite[Proposition~4.4 and the preceding discussion]{DeLucaZamboni2021}.
Iterating the switch in \cref{lem:balancing-switch} gives a
coefficientwise comparison with $b$. The least imbalance length
determines exactly where strict inequality begins.

\begin{theorem}
\label{thm:coefficientwise-order}
Every cyclic binary word $w$ of length $\alpha$ with $\rho$ ones
satisfies
\[
 I(C_q(w);x)\preceqcf I(C_q(b);x),
\]
with equality if and only if $w$ is balanced. More precisely, if $w$
is not balanced and $r$ is its least imbalance length, then
\begin{equation}\label{eq:first-imbalance-gap}
 \begin{aligned}
 i_j(C_q(w))&=i_j(C_q(b)) &&(0\le j<2r),\\
 i_j(C_q(w))&<i_j(C_q(b)) &&(2r\le j\le\alpha).
 \end{aligned}
\end{equation}
\end{theorem}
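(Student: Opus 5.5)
We argue by iterating \cref{lem:balancing-switch}, tracking a potential that strictly increases with each switch. If $w$ is balanced, then $w$ is a rotation of $b$ by the uniqueness of balanced cyclic words with $\rho$ ones, so $C_q(w)\cong C_q(b)$ and the two polynomials are equal. This gives the ``if'' direction. For the rest, fix a nonbalanced $w$ with least imbalance length $r$. \Cref{lem:first-imbalance} gives $2\le r\le\lfloor\alpha/2\rfloor$ and, after rotation, $w\equiv(z0A1z)(1B0)$ with $z=z^R$ and $|z|=r-2$. Let $w^\star$ be the switched word. By \cref{lem:balancing-switch}, $I(C_q(w^\star))-I(C_q(w))$ vanishes in degrees below $2r$ and is strictly positive in degrees $2r,\ldots,\alpha$. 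Moreover, $w^\star$ has the same length-$s$ factor-weight multisets as $w$ for every $s<r$, so it is still balanced at all lengths below $r$. Its own least imbalance length, if any, is therefore at least $r$.

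Next we iterate: $w=w_0\to w_1\to\cdots$, each step applying the switch at the current least imbalance length $r_t$. The $r_t$ are nondecreasing, and each step gives $I(C_q(w_t))\prec I(C_q(w_{t+1}))$ coefficientwise, with strict increase in degree $\alpha$ in particular. The coefficient $i_\alpha$ takes finitely many values on the finite set of words, so the sequence cannot cycle and must terminate. It can only terminate at a balanced word, which is a rotation of $b$. Summing the differences gives
\[
I(C_q(b))-I(C_q(w))=\sum_t x^{2r_t}(P_{A_t}+xD_{A_t})(P_{B_t}-xD_{B_t}).
\]
Every summand has nonnegative coefficients and is supported in degrees $\ge 2r_t\ge 2r$. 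The first summand, with $r_0=r$, is strictly positive in every degree $2r,\ldots,\alpha$. Hence the total difference is zero below degree $2r$ and strictly positive in degrees $2r\le j\le\alpha$. This proves \eqref{eq:first-imbalance-gap}, and with it $\preceqcf$ and the ``only if'' direction of equality, since $2r\le\alpha$.

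The main obstacle is the claim that the least imbalance length never decreases. We need more than the preservation of factor-weight multisets of each length $s<r$. Balance at length $s$ depends only on the set of weights occurring among length-$s$ factors, and equal multisets give equal sets, so $w^\star$ is balanced at every length $s<r$. Hence $r(w^\star)\ge r$, which is all the argument uses. Termination needs no finer measure, because the strict increase of $i_\alpha$ at every step already rules out cycling. The remaining care is to check that each intermediate word still has $\rho$ ones and length $\alpha$, which \cref{lem:balancing-switch} provides, so that the comparison with $C_q(b)$ is always made within the same cycle-linking family.
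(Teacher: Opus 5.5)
Your proposal is correct and follows essentially the same route as the paper. You iterate the switch of \cref{lem:balancing-switch} at the current least imbalance length, use the strict increase of $i_\alpha$ (since $2r_t\le\alpha$) to force termination at a rotation of $b$, and use preservation of shorter factor-weight multisets to get $r_{t+1}\ge r_t$, so that no coefficient below degree $2r$ ever changes while the first switch already gives strict gains in degrees $2r,\ldots,\alpha$; your explicit telescoping sum is simply more detailed bookkeeping of the paper's ``subsequent steps never decrease, first step strictly increases'' argument.
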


\begin{proof}
The balanced case follows from uniqueness up to rotation. Otherwise
start with $w^{(0)}=w$. Whenever the current word $w^{(i)}$ is not
balanced, take its least imbalance length $r_i$, use
\cref{lem:first-imbalance} to find the corresponding configuration,
and apply \cref{lem:balancing-switch} to obtain $w^{(i+1)}$.
Every step preserves length and weight, does not decrease any
coefficient, and strictly increases $i_\alpha$ because $2r_i\le\alpha$.
There are finitely many cyclic orders of this length and weight, and
strict increase of $i_\alpha$ prevents repetition. The process must
therefore terminate at a balanced word, since otherwise another
switch would be available. By uniqueness, the terminal graph is
isomorphic to $C_q(b)$, proving coefficientwise domination.

A switch at length $r_i$ preserves all shorter-factor weight
multisets. Those lengths were balanced and remain balanced, so
$r_{i+1}\ge r_i$ whenever another step is required. Thus all switch
lengths are at least $r_0=r$, and no step changes a coefficient of
degree below $2r$. The first step strictly increases every coefficient
in degrees $2r,\ldots,\alpha$, and subsequent steps never decrease
them. This proves \eqref{eq:first-imbalance-gap} and excludes equality
for nonbalanced words.
\end{proof}
The precise equality range in \cref{thm:coefficientwise-order}
also determines the maximizing cyclic orders for each fixed
coefficient.
\begin{theorem}\label{thm:fixed-order}
Let $3\le\beta\le\alpha$ and $t=\lfloor\beta/2\rfloor$.
Among all cyclic binary words of length $\alpha$ with $\rho$ ones,
$i_\beta(C_q(w))$ is maximized precisely by the $t$-balanced words.
\end{theorem}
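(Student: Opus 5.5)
The plan is to read the result off \cref{thm:coefficientwise-order}. Every word $w$ satisfies $i_\beta(C_q(w))\le i_\beta(C_q(b))$, and $b$ is balanced, hence $t$-balanced. So the maximum value of $i_\beta$ over the family equals $i_\beta(C_q(b))$. It remains to show that equality holds exactly for the $t$-balanced words.

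First I would match ``$t$-balanced'' with the least imbalance length. If $w$ is balanced, equality holds by \cref{thm:coefficientwise-order}, and $w$ is trivially $t$-balanced. If $w$ is not balanced, let $r$ be its least imbalance length. Then $w$ is $t$-balanced if and only if balance holds at every length $s\le t$, that is, if and only if $r>t$. Note that $r\ge2$ and $r\le\lfloor\alpha/2\rfloor$ by \cref{lem:first-imbalance}.

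Next I would compare $\beta$ with $2r$ using \eqref{eq:first-imbalance-gap}, where $\beta\le\alpha$ places us in the admissible range.
\begin{itemize}
\item If $r>t=\lfloor\beta/2\rfloor$, then $r\ge t+1$, so $2r\ge 2t+2>\beta$. Hence $\beta<2r$ and $i_\beta(C_q(w))=i_\beta(C_q(b))$.
\item If $r\le t$, then $2r\le 2t\le\beta\le\alpha$, so $i_\beta(C_q(w))<i_\beta(C_q(b))$.
\end{itemize}
Combining the two cases, $i_\beta$ attains its maximum exactly when $w$ is balanced or $r>t$, which is exactly when $w$ is $t$-balanced.

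I expect essentially no obstacle, since the work is already done in \cref{thm:coefficientwise-order}. The only points that need care are the floor arithmetic ($\beta<2t+2$) and the observation that the maximum is attained at all, namely by $b$ itself. This second point makes ``maximized precisely'' meaningful.
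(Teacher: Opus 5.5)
Your proposal is correct and follows the paper's proof essentially verbatim. Both identify the maximum as the value at $b$ via \cref{thm:coefficientwise-order} and translate $t$-balancedness of a nonbalanced word into $r>t$, i.e.\ $\beta<2r$. Both then read off equality versus strict inequality from \eqref{eq:first-imbalance-gap}.
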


\begin{proof}
By \cref{thm:coefficientwise-order}, the maximum is the value at $b$.
A balanced word is $t$-balanced and attains it. For a nonbalanced word
$w$ with least imbalance length $r$, being $t$-balanced is equivalent
to $r>t$, or equivalently $\beta<2r$. By
\eqref{eq:first-imbalance-gap}, this is precisely the condition for
$i_\beta(C_q(w))=i_\beta(C_q(b))$.
\end{proof}

\section{Excluding extra edges and identifying the extremal graphs}
\label{sec:global-reduction}

Throughout this section, let $3\le\beta\le\alpha$ and put
$p=\beta-2$. We compare the loss caused by the linking edges with
the loss forced by any graph having more than the minimum number
of edges.

\subsection{Comparing independent-set losses}
We first bound from above the number of independent $\beta$-sets
lost when the linking edges are added to $T(n,\alpha)$.
\begin{lemma}\label{lem:cycle-loss}
Every cycle-linking $C_q(w)$ satisfies
\begin{equation}\label{eq:cycle-loss-bound}
 \turcl{\alpha}{\beta}{n}-i_\beta(C_q(w))
 \le\alpha\binom{\alpha-2}{p}(q+1)^p.
\end{equation}
\end{lemma}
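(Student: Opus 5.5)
The plan is a union bound over the $\alpha$ linking edges. Since $\turcl{\alpha}{\beta}{n}=i_\beta(T(n,\alpha))$ by \eqref{eq:T-independent}, and $C_q(w)$ is obtained from the disjoint union $G_0$ of the cliques $C_i$ by adding the edges $y_ix_{i+1}$, the left side of \eqref{eq:cycle-loss-bound} counts exactly the independent $\beta$-sets of $G_0$ that contain both endpoints of at least one linking edge. (The clique orders of $C_q(w)$ are those of $T(n,\alpha)$, so $G_0\cong T(n,\alpha)$.) I will not use the alternating formula \eqref{eq:coefficient-matching}. Its $j=1$ term alone would give the same bound. Instead I take a direct upper estimate, which avoids any sign bookkeeping.

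First step. For each $i\in\mathbb Z/\alpha\mathbb Z$, let $\mathcal A_i$ be the family of independent $\beta$-sets of $G_0$ containing both $y_i$ and $x_{i+1}$. The lost sets form $\bigcup_i\mathcal A_i$, so the loss is at most $\sum_i|\mathcal A_i|$.

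Second step. Bound each $|\mathcal A_i|$. A set in $\mathcal A_i$ contains $y_i\in C_i$ and $x_{i+1}\in C_{i+1}$. Because it is independent in $G_0$, it contains no other vertex of $C_i$ or $C_{i+1}$. Its remaining $\beta-2=p$ vertices therefore lie in $p$ distinct cliques chosen from the other $\alpha-2$ cliques, with one vertex from each. There are $\binom{\alpha-2}{p}$ choices of cliques, and each clique offers $s_j=q+w_j\le q+1$ vertices. Hence $|\mathcal A_i|\le\binom{\alpha-2}{p}(q+1)^p$. Summing over the $\alpha$ linking edges gives \eqref{eq:cycle-loss-bound}.

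Main obstacle. There is essentially none. The only point needing care is to confirm that "lost" sets are exactly the $G_0$-independent sets spanning some linking edge. This holds because $C_q(w)$ has the same vertex set as $T(n,\alpha)$, and adding edges only destroys independent sets. I would state this identification explicitly, and note that the bound holds for every word $w$ and every $q\ge2$.
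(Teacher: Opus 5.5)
Your proposal is correct and follows essentially the same route as the paper: identify the loss with the independent $\beta$-sets of $T(n,\alpha)$ that span some linking edge, bound each edge's contribution by $\binom{\alpha-2}{p}(q+1)^p$, and apply the union bound over the $\alpha$ linking edges. Your remark that $G_0\cong T(n,\alpha)$, because the clique orders $q+w_i$ with $\rho$ ones match those of $T(n,\alpha)$, is a helpful explicit check that the paper leaves implicit.
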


\begin{proof}
A linking edge invalidates only the independent sets containing both
its endpoints. An independent $\beta$-set of this kind is completed
by selecting $p$ other cliques and one vertex in each. There are at
most $\binom{\alpha-2}{p}(q+1)^p$ choices. The union bound over the
$\alpha$ linking edges proves the claim.
\end{proof}
We next apply \cref{lem:general-ffk} to the complement of $G$.
This gives a lower bound on the loss relative to $T(n,\alpha)$
when $e(G)\ge t(n,\alpha)+\alpha+1$.
\begin{lemma}\label{lem:ffk-loss}
Suppose $q\ge3$. If $G$ has order $n$, independence number $\alpha$,
and $e(G)\ge t(n,\alpha)+\alpha+1$, then
\begin{equation}\label{eq:ffk-loss-lower}
 \turcl{\alpha}{\beta}{n}-i_\beta(G)
 \ge(\alpha+1)\binom{\alpha-2}{p}(q-2)^p.
\end{equation}
\end{lemma}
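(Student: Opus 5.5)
The plan is to pass to the complement and apply \cref{lem:general-ffk} at the slightly shifted order $N=n-1$. Put $H=\overline G$. Then $H$ is $K_{\alpha+1}$-free and $k_\beta(H)=i_\beta(G)$. By \eqref{eq:T-independent},
\[
 e(H)=\binom n2-e(G)\le\turcl{\alpha}{2}{n}-\alpha-1 .
\]
\cref{lem:turan-recurrence} with $k=2$ gives $\turcl{\alpha}{2}{n}-\turcl{\alpha}{2}{n-1}=m$, where $m:=n-\ceil{n/\alpha}$, since $\turcl{\alpha-1}{1}{m}=m$. Hence $e(H)\le\turcl{\alpha}{2}{n-1}+s$ with $s:=m-\alpha-1$.

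Next I check the hypotheses of \cref{lem:general-ffk} with $r=\alpha$ and $N=n-1$. First, $N\ge\beta$ is clear. Next, $s\ge0$ because $m\ge n-q-1\ge(q-1)\alpha-1\ge2\alpha-1\ge\alpha+1$ when $q\ge3$. Finally, $s<N-\floor{N/\alpha}$, because $N-\floor{N/\alpha}\ge n-1-\ceil{n/\alpha}+1>s$. The lemma then yields
\[
 i_\beta(G)\le\turcl{\alpha}{\beta}{n-1}+\turcl{\alpha-1}{\beta-1}{s}.
\]
Subtracting this from $\turcl{\alpha}{\beta}{n}$ and using \cref{lem:turan-recurrence} with $k=\beta$, the loss is at least
\[
 \turcl{\alpha-1}{\beta-1}{m}-\turcl{\alpha-1}{\beta-1}{m-\alpha-1}.
\]

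To estimate this difference, I telescope it over the $\alpha+1$ unit steps $m'=m-\alpha,\ldots,m$. Applying \cref{lem:turan-recurrence} once more, now with $r=\alpha-1$ and $k=\beta-1$ (when $\alpha-1\ge2$, i.e. $\alpha\ge3$), each step contributes $\turcl{\alpha-2}{p}{m''}$, where $m''=m'-\ceil{m'/(\alpha-1)}$. The case $p=0$ contributes $1$ per step and is immediate. For $p\ge1$, $\turcl{\alpha-2}{p}{m''}$ counts the $p$-cliques of a balanced complete $(\alpha-2)$-partite graph. Choosing $p$ of the $\alpha-2$ parts and one vertex from each gives at least $\binom{\alpha-2}{p}\floor{m''/(\alpha-2)}^p$ such cliques. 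So it suffices to show $\floor{m''/(\alpha-2)}\ge q-2$ for every $m'\ge m-\alpha$, that is, $m''\ge(q-2)(\alpha-2)$. Summing the $\alpha+1$ terms then gives \eqref{eq:ffk-loss-lower}.

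The main obstacle is this floor/ceiling arithmetic. Since $x-\ceil{x/(\alpha-1)}$ is nondecreasing in $x$, the worst case is $m'=m-\alpha\ge(q-1)\alpha-q-1$. Writing this as roughly $(q-2)(\alpha-1)+\alpha-3$, removing a $1/(\alpha-1)$ fraction should leave about $(q-2)(\alpha-2)+(\alpha-3)$, up to rounding. I will verify this carefully, first for $\rho=0$ and then for $\rho>0$, to confirm that the rounding loss is at most the slack $\alpha-3$. If the estimate is tight for $\alpha=3$, I will treat that case directly: there the parts have sizes $\floor{m''}$ and $\alpha-2=1$.
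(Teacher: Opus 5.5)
Your proposal is essentially the paper's proof: you pass to $H=\overline G$, apply \cref{lem:general-ffk} with $N=n-1$ and $s=m-\alpha-1$, and bound $\turcl{\alpha-1}{\beta-1}{m}-\turcl{\alpha-1}{\beta-1}{m-\alpha-1}$ by $\alpha+1$ steps of \cref{lem:turan-recurrence}; the paper phrases these steps as deleting vertices one at a time from a largest part of $\Turan_{\alpha-1}(m)$. Two small fixes are needed: the step $n-q-1\ge(q-1)\alpha-1$ in your check of $s\ge0$ is false once $q>\alpha$ (use $m\ge q(\alpha-1)$ instead, since $\ceil{n/\alpha}=q+1$ only when $n>q\alpha$), and the deferred rounding check closes at once because $\floor{m''/(\alpha-2)}=\floor{m'/(\alpha-1)}$, so you only need $m'\ge m-\alpha\ge(q-2)(\alpha-1)$, which holds with no separate treatment of $\alpha=3$.
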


\begin{proof}
Put $H=\overline G$ and $d=n-\lceil n/\alpha\rceil$. Then $H$ is
$K_{\alpha+1}$-free, $i_\beta(G)=k_\beta(H)$, and
\cref{lem:turan-recurrence} gives
\[
 e(H)\le\turcl{\alpha}{2}{n}-\alpha-1
       =\turcl{\alpha}{2}{n-1}+(d-\alpha-1).
\]
To apply \cref{lem:general-ffk}, take $N=n-1$ and $s=d-\alpha-1$.
Here $N\ge\beta$, $N-\lfloor N/\alpha\rfloor=d$, and
$d\ge q(\alpha-1)$ gives $s\ge2\alpha-4\ge0$; also $s<d$.
Consequently
\[
 i_\beta(G)\le\turcl{\alpha}{\beta}{n-1}
                  +\turcl{\alpha-1}{\beta-1}{d-\alpha-1}.
\]
Combining this bound with \cref{lem:turan-recurrence} at order $n$
yields
\begin{equation}\label{eq:ffk-loss-first}
 \turcl{\alpha}{\beta}{n}-i_\beta(G)
 \ge\turcl{\alpha-1}{\beta-1}{d}
       -\turcl{\alpha-1}{\beta-1}{d-\alpha-1}.
\end{equation}

To bound this difference, start with $\Turan_{\alpha-1}(d)$ and
delete $\alpha+1$ vertices successively, each time from a largest
part. Every intermediate graph is balanced, and the final graph is
$\Turan_{\alpha-1}(d-\alpha-1)$. Since
\[
 d-\alpha-1\ge q(\alpha-1)-(\alpha+1)
             \ge(q-2)(\alpha-1),
\]
every part has at least $q-2$ vertices throughout. Each deleted
vertex therefore belongs to at least
$\binom{\alpha-2}{p}(q-2)^p$ copies of $K_{\beta-1}$ in the graph
present at that step. Each lost clique is counted only at the first
deletion of one of its vertices. Summing the $\alpha+1$ successive
losses and using \eqref{eq:ffk-loss-first} proves
\eqref{eq:ffk-loss-lower}.
\end{proof}

\subsection{Proofs of the main theorems}

\begin{proof}[Proof of \cref{thm:fixed-size}]
We show that $N_{\alpha,\beta}:=3\alpha(\beta-2)(\alpha+1)$ suffices.
For $n\ge N_{\alpha,\beta}$, one has $q\ge3p(\alpha+1)$ and hence
$n\ge3\alpha$. Bernoulli's inequality gives
\[
 \left(\frac{q-2}{q+1}\right)^p
 =\left(1-\frac3{q+1}\right)^p
 \ge1-\frac{3p}{q+1}>\frac{\alpha}{\alpha+1},
\]
where strictness follows from $q+1>3p(\alpha+1)$. Thus
$(\alpha+1)(q-2)^p>\alpha(q+1)^p$.

Fix $C\in\mathcal B_\beta(n,\alpha)$; such a graph exists because
a balanced word is $\lfloor\beta/2\rfloor$-balanced. If an admissible
graph $G$ has at least $t(n,\alpha)+\alpha+1$ edges, then
\cref{lem:ffk-loss,lem:cycle-loss} give
\[
 \begin{aligned}
 \turcl{\alpha}{\beta}{n}-i_\beta(G)
 &\ge(\alpha+1)\binom{\alpha-2}{p}(q-2)^p\\
 &>\alpha\binom{\alpha-2}{p}(q+1)^p
 \ge\turcl{\alpha}{\beta}{n}-i_\beta(C).
 \end{aligned}
\]
Hence $i_\beta(G)<i_\beta(C)$. By \cref{prop:BJ}, every remaining
admissible graph has exactly $t(n,\alpha)+\alpha$ edges and is a
cycle-linking of $T(n,\alpha)$. Its representation $C_q(w)$ follows
from \cref{lem:canonical-cycle}, and \cref{thm:fixed-order} gives the
required inequality and equality characterization within this family.
\end{proof}

\begin{proof}[Proof of \cref{thm:main-coefficientwise}]
Take
\[
 N_\alpha:=\max\left\{3\alpha,
                 \max_{3\le\beta\le\alpha}N_{\alpha,\beta}\right\},
\]
using the thresholds in the preceding proof. This is finite and is
$O(\alpha^3)$. For $n\ge N_\alpha$, \cref{thm:fixed-size} applies to
every $3\le\beta\le\alpha$. Since the clique-size word of
$B(n,\alpha)$ is balanced, it satisfies every required local balance
condition, and $i_\beta(G)\le i_\beta(B(n,\alpha))$ for all these
$\beta$. Also $i_0=1$, $i_1=n$, and $i_j=0$ for $j>\alpha$ for both
graphs. Finally, \cref{prop:BJ} and $i_2(G)=\binom n2-e(G)$ give the
inequality for $i_2$. This proves coefficientwise domination.

If equality holds as polynomials, equality at $i_\alpha$ and
\cref{thm:fixed-size} imply that $G\cong C_q(w)$ for a
$\lfloor\alpha/2\rfloor$-balanced word $w$. By
\cref{lem:first-imbalance}, such a word is balanced. Uniqueness of the
balanced cyclic order and equivalence of attachment choices therefore
give $G\cong B(n,\alpha)$. The converse is immediate.
\end{proof}

\section{Special cases and examples}
\label{sec:examples}

\subsection{Equal clique orders}

When all clique orders are equal, \cref{prop:matching-formula} and the
matching counts for a cycle give the following formula.

\begin{corollary}\label{cor:divisible-value}
If $n=\alpha q$ and $0\le\beta\le\alpha$, then
\begin{equation}\label{eq:divisible-value}
 i_\beta(C_q(0^\alpha))
 =\sum_{j=0}^{\floor{\beta/2}}(-1)^j
   \frac{\alpha}{\alpha-j}\binom{\alpha-j}{j}
   \binom{\alpha-2j}{\beta-2j}q^{\beta-2j}.
\end{equation}
\end{corollary}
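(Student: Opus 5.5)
The plan is to specialize \cref{prop:matching-formula} to the constant word $w=0^\alpha$, where $s_i=q$ for every $i$. In \eqref{eq:coefficient-matching}, the inner sum over $M\in\cM_j(C_\alpha)$ becomes $e_{\beta-2j}$ evaluated at $\alpha-2j$ variables all equal to $q$. This is $\binom{\alpha-2j}{\beta-2j}q^{\beta-2j}$, independent of $M$ because $|V(M)|=2j$. Hence
\[
 i_\beta(C_q(0^\alpha))=\sum_{j=0}^{\floor{\beta/2}}(-1)^j\,|\cM_j(C_\alpha)|\binom{\alpha-2j}{\beta-2j}q^{\beta-2j},
\]
and it remains to identify the number of $j$-edge matchings of the cycle $C_\alpha$.

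For that count, I would use the classical formula $|\cM_j(C_\alpha)|=\frac{\alpha}{\alpha-j}\binom{\alpha-j}{j}$ for $0\le j\le\alpha/2$. I would prove it by a short double-counting argument. Count pairs $(M,e)$ with $M\in\cM_j(C_\alpha)$ and $e$ an edge of $C_\alpha$ not covered by $M$, meaning an edge that is not itself in $M$; these number $(\alpha-j)|\cM_j(C_\alpha)|$. Alternatively, mark a distinguished edge $e$ that is not in $M$. Deleting $e$ turns the cycle into a path with $\alpha$ vertices and $\alpha-1$ edges, and $M$ is a $j$-matching of that path. Path matchings are counted by $\binom{\alpha-j}{j}$, since there are $\binom{m-j}{j}$ ways to choose $j$ pairwise nonadjacent edges out of $m-1=\alpha-1$ edges in a row, with $m=\alpha$. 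There are $\alpha$ choices of $e$, so the number of pairs is $\alpha\binom{\alpha-j}{j}$. Equating the two counts gives the formula.

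One boundary issue needs checking. When $j=\alpha/2$, the denominator $\alpha-j$ is positive, so the formula is fine; when $j>\alpha/2$, both sides vanish. Terms with $\beta-2j<0$ do not occur because $j\le\floor{\beta/2}$. The case $\alpha-j=0$ cannot occur since $j\le\beta/2\le\alpha/2<\alpha$.

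Substituting the matching count into the specialized sum yields \eqref{eq:divisible-value}. The only mildly delicate step is the matching count itself. The double count avoids the usual case split on whether a fixed edge lies in $M$, which would otherwise give the identity $\binom{\alpha-j}{j}+\binom{\alpha-j-1}{j-1}=\frac{\alpha}{\alpha-j}\binom{\alpha-j}{j}$ directly as a fallback check.
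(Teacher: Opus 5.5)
Your proof is correct and follows the paper's route: you specialize \eqref{eq:coefficient-matching} to $s_i=q$, so that each $j$-matching contributes $(-1)^j\binom{\alpha-2j}{\beta-2j}q^{\beta-2j}$, and then you count $|\cM_j(C_\alpha)|$. The only difference is minor: you obtain $\frac{\alpha}{\alpha-j}\binom{\alpha-j}{j}$ directly by double counting pairs $(M,e)$ with $e\notin M$, whereas the paper splits according to whether a fixed edge lies in $M$ and then uses the identity $\binom{\alpha-j}{j}+\binom{\alpha-j-1}{j-1}=\frac{\alpha}{\alpha-j}\binom{\alpha-j}{j}$.
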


\begin{proof}
A path on $m$ vertices has $\binom{m-j}{j}$ matchings of size $j$:
if $i_1<\cdots<i_j$ are the selected edge indices, the map
$(i_1,\ldots,i_j)\mapsto(i_1,i_2-1,\ldots,i_j-j+1)$ is a bijection
to the $j$-subsets of $\{1,\ldots,m-j\}$. For $j\ge1$, splitting
cycle matchings according to a fixed edge gives
\[
 |\cM_j(C_\alpha)|
 =\binom{\alpha-j}{j}+\binom{\alpha-j-1}{j-1}
 =\frac{\alpha}{\alpha-j}\binom{\alpha-j}{j}.
\]
The final expression is also $1$ when $j=0$. Each matching leaves
$\alpha-2j$ cliques of order $q$, so its contribution in
\eqref{eq:coefficient-matching} is
$(-1)^j\binom{\alpha-2j}{\beta-2j}q^{\beta-2j}$. Summing proves
the formula.
\end{proof}

\subsection{Independent triples}

Every cyclic binary word is $1$-balanced, so every cycle-linking has
the same number of independent triples.

\begin{corollary}\label{cor:beta3}
If $q\ge3(\alpha+1)$, then every $n$-vertex $2$-connected graph $G$
with $\alpha(G)=\alpha$ satisfies
\[
 i_3(G)\le i_3(T(n,\alpha))-(\alpha-2)n,
\]
with equality exactly for the cycle-linkings of $T(n,\alpha)$.
\end{corollary}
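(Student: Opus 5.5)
Corollary~\ref{cor:beta3} is \cref{thm:fixed-size} with $\beta=3$, plus an explicit evaluation of the extremal value. Here $p=1$ and $N_{\alpha,3}=3\alpha(\alpha+1)$. The hypothesis $q\ge3(\alpha+1)$ gives $n\ge q\alpha\ge N_{\alpha,3}$, so \cref{thm:fixed-size} applies. Its maximizers are the cycle-linkings whose words are $1$-balanced. Every binary word is $1$-balanced, since length-one factors have weight $0$ or $1$. Hence $\mathcal B_3(n,\alpha)$ is the whole family of cycle-linkings of $T(n,\alpha)$. This gives the inequality $i_3(G)\le i_3(B(n,\alpha))$, with equality exactly for cycle-linkings.

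It remains to compute $i_3(C_q(w))$ for an arbitrary word $w$. I would use \eqref{eq:coefficient-matching} with $\beta=3$:
\[
 i_3(C_q(w))=e_3(s_0,\ldots,s_{\alpha-1})
   -\sum_{\{i,i+1\}\in E(C_\alpha)}\ \sum_{k\notin\{i,i+1\}}s_k .
\]
The first term is $\turcl{\alpha}{3}{n}=i_3(T(n,\alpha))$ by \eqref{eq:T-independent}. In the second term each $s_k$ is excluded by exactly the two cycle edges incident with $k$, so each appears $\alpha-2$ times. Since $\sum_k s_k=n$, the correction is $(\alpha-2)n$. Alternatively, each linking edge kills exactly $n-s_i-s_{i+1}$ triples, and no triple contains two linking edges, because adjacent linking edges share a clique.

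There is essentially no obstacle. The only points to check are that the threshold hypothesis is in the form \cref{thm:fixed-size} needs, which the computation $n\ge N_{\alpha,3}$ above settles, and that we have $q\ge2$ and $n\ge3\alpha$. Both follow from $q\ge3(\alpha+1)$.
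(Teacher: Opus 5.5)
Your proposal is correct and follows the paper's own proof: apply \cref{thm:fixed-size} with $\beta=3$, note that every cyclic word is $1$-balanced so that $\mathcal B_3(n,\alpha)$ is the whole cycle-linking family, and evaluate $i_3$ via \eqref{eq:coefficient-matching}. There only matchings of size at most one contribute, and the linking-edge losses $n-s_i-s_{i+1}$ sum to $(\alpha-2)n$. Your check $n\ge q\alpha\ge3\alpha(\alpha+1)=N_{\alpha,3}$ is exactly what the paper means by using the threshold from the proof of \cref{thm:fixed-size}.
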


\begin{proof}
Only matchings of sizes zero and one contribute to
\eqref{eq:coefficient-matching}. The edge between $C_i$ and $C_{i+1}$
invalidates $n-s_i-s_{i+1}$ triples, and
$\sum_i(n-s_i-s_{i+1})=(\alpha-2)n$. Apply
\cref{thm:fixed-size} with $\beta=3$ and the threshold from its proof.
\end{proof}

\subsection{Adjacency and higher-order balance}

For $\beta\in\{4,5\}$ with $\beta\le\alpha$, Theorem~4.6 identifies
the maximizing cyclic words as precisely the $2$-balanced ones.
A binary cyclic word is $2$-balanced if and only if $00$ and $11$
do not both occur. Thus no two large cliques are consecutive when
$\rho\le\alpha/2$, and no two small cliques are consecutive when
$\rho\ge\alpha/2$.

For $\beta=4$, Proposition~3.3 makes the dependence on adjacency
explicit:
\begin{equation}
\begin{aligned}
 i_4(C_q(w))
 &=e_4(s_0,\ldots,s_{\alpha-1})
   -(\alpha-4)e_2(s_0,\ldots,s_{\alpha-1})\\
 &\quad-\sum_{i=0}^{\alpha-1}s_i s_{i+1}
   +\frac{\alpha(\alpha-3)}2,
\end{aligned}
\end{equation}
where indices are taken modulo $\alpha$.
Indeed, a pair of clique indices is disjoint from $\alpha-4$
cycle edges, with one additional edge when the pair is consecutive,
and the cycle has $\alpha(\alpha-3)/2$ two-edge matchings.
Only the adjacent-product sum depends on the cyclic order. Since
\[
 \sum_{i=0}^{\alpha-1}s_i s_{i+1}
 =\alpha q^2+2q\rho
  +\sum_{i=0}^{\alpha-1}w_iw_{i+1},
\]
maximizing $i_4$ is equivalent to minimizing the number of cyclic
occurrences of $11$, in agreement with the $2$-balance criterion.

However, an order that maximizes $i_4$ and $i_5$ need not maximize
higher coefficients. Take $\alpha=6$ and $\rho=2$, and consider
\[
 w=101000,\qquad b=100100.
\]
Both words are $2$-balanced. The word $w$ contains the length-three
factors $101$ and $000$, so its least imbalance length is three,
whereas every length-three factor of $b$ contains exactly one $1$.
Applying Lemma~4.4 with $z=0$ and $A=B=\varepsilon$, up to rotation,
gives
\[
 I(C_q(b);x)-I(C_q(w);x)=x^6.
\]
Thus the two graphs have the same number of independent $j$-sets
for every $j\le5$, but $C_q(b)$ has exactly one more independent
$6$-set. This illustrates the distinction between separating the
large cliques and spacing them evenly: both orders maximize $i_4$
and $i_5$, but only the balanced order maximizes $i_6$.

\section{Concluding remarks and open problems}

The large-order threshold comes from comparing the bounds in
\cref{lem:ffk-loss,lem:cycle-loss}. It does not use $2$-connectivity
inside the clique-counting step and is not expected to be sharp.
In contrast, \cref{thm:coefficientwise-order} holds for every $q\ge2$:
within the cycle-linkings of $T(n,\alpha)$, the shortest imbalance
determines the first coefficient that is smaller than the balanced
value. This phenomenon may also be useful in extremal problems with
related generating functions.

For $2\le\alpha<n$, combining \cref{lem:general-ffk} with the
connected minimum-edge bound
$e(G)\ge t(n,\alpha)+\alpha-1$
from~\cite[Proposition~5]{BougardJoret2008} gives, with
\[
 N=n-1,\qquad
 s=n-\left\lceil\frac n\alpha\right\rceil-\alpha+1,
\]
\[
 i_j(G)\le
 \turcl{\alpha}{j}{n-1}
 +\turcl{\alpha-1}{j-1}{s}
 =i_j(TC_{n,\alpha})
 \qquad(2\le j\le\alpha).
\]
Together with the trivial case $\alpha=1$, this recovers the
coefficientwise upper bound of Lehner and Wagner
~\cite[Theorem~2]{LehnerWagner2017}, without a large-order assumption.
Moreover, in the range
$3\le\beta\le\min\{\alpha,n-\alpha\}$, equality in the coefficient
of $x^\beta$ can also be recovered within the same framework.
The lower-shadow form of the Frankl--F\"uredi--Kalai theorem
propagates equality down to $i_2$ and $i_3$; the connected
minimum-edge classification, together with the resulting equality
in the number of independent triples, then identifies
$TC_{n,\alpha}$ as the unique equality case.

In the $2$-connected setting, it remains to understand how far
the large-order assumption can be weakened and which graphs are
extremal at smaller orders. We conclude with the following
related problems.
\begin{problem}
Determine the smallest $N_{\alpha,\beta}$ for which
\cref{thm:fixed-size} holds for every $n\ge N_{\alpha,\beta}$.
\end{problem}

\begin{problem}
Develop a Frohmader--FFK-type clique bound incorporating the condition
that the complement is $2$-connected. Can such a refinement reduce the
present quadratic dependence on $\alpha$ to a linear one for fixed
$\beta$?
\end{problem}

\begin{problem}
Determine the maximum number of independent $\beta$-sets and all
extremal graphs in the remaining small-order range, where twisted
Tur\'an graphs and other structures may compete with ordinary
cycle-linkings.
\end{problem}

\bibliographystyle{unsrt}
\bibliography{references}

@article{BruyereMelot2009,
  author  = {Bruy\`ere, V\'eronique and M\'elot, Hadrien},
  title   = {{Fibonacci} index and stability number of graphs:
             a polyhedral study},
  journal = {Journal of Combinatorial Optimization},
  volume  = {18},
  number  = {3},
  pages   = {207--228},
  year    = {2009},
  doi     = {10.1007/s10878-009-9228-7}
}

@article{MohrRautenbach2021,
  author  = {Mohr, Elena and Rautenbach, Dieter},
  title   = {On the maximum number of maximum independent sets
             in connected graphs},
  journal = {Journal of Graph Theory},
  volume  = {96},
  pages   = {510--521},
  year    = {2021},
  doi     = {10.1002/jgt.22629}
}

@article{Turan1941,
  author  = {Tur{\'a}n, P{\'a}l},
  title   = {Egy gr{\'a}felm{\'e}leti
             sz{\'e}ls{\H{o}}{\'e}rt{\'e}kfeladatr{\'o}l},
  journal = {Matematikai {\'e}s Fizikai Lapok},
  volume  = {48},
  pages   = {436--452},
  year    = {1941},
  note    = {In Hungarian}
}

@book{Bollobas1978,
  author    = {Bollob{\'a}s, B{\'e}la},
  title     = {Extremal Graph Theory},
  series    = {London Mathematical Society Monographs},
  number    = {11},
  publisher = {Academic Press},
  address   = {London},
  year      = {1978}
}

@book{Lothaire2002,
  author    = {Lothaire, M.},
  title     = {Algebraic Combinatorics on Words},
  series    = {Encyclopedia of Mathematics and its Applications},
  volume    = {90},
  publisher = {Cambridge University Press},
  address   = {Cambridge},
  year      = {2002},
  doi       = {10.1017/CBO9781107326019}
}

@article{AlonShikhelman2016,
  author  = {Alon, N. and Shikhelman, C.},
  title   = {Many {$T$} copies in {$H$}-free graphs},
  journal = {J. Combin. Theory Ser. B},
  volume  = {121},
  year    = {2016},
  pages   = {146--172},
  doi     = {10.1016/j.jctb.2016.03.004},
  note    = {\doi{10.1016/j.jctb.2016.03.004}}
}

@article{Zykov1949,
  author  = {Zykov, A. A.},
  title   = {On some properties of linear complexes},
  journal = {Mat. Sbornik N.S.},
  volume  = {24(66)},
  year    = {1949},
  pages   = {163--188},
  note    = {In Russian; English translation in \emph{Amer. Math. Soc. Transl.},
             No.~79 (1952), 1--33}
}

@article{BougardJoret2008,
  author  = {Bougard, N. and Joret, G.},
  title   = {{Tur\'an}'s theorem and {$k$}-connected graphs},
  journal = {J. Graph Theory},
  volume  = {58},
  year    = {2008},
  pages   = {1--13},
  doi     = {10.1002/jgt.20289},
  note    = {\doi{10.1002/jgt.20289}}
}

@misc{DeLucaZamboni2021,
  author        = {De Luca, A. and Zamboni, L. Q.},
  title         = {On the extremal values of the cyclic continuants of
                   {Motzkin} and {Straus}},
  howpublished  = {arXiv:2108.11313 [math.CO]},
  year          = {2021},
  eprint        = {2108.11313},
  archivePrefix = {arXiv},
  primaryClass  = {math.CO}
}

@article{FranklFurediKalai1988,
  author  = {Frankl, P. and F{\"u}redi, Z. and Kalai, G.},
  title   = {Shadows of colored complexes},
  journal = {Math. Scand.},
  volume  = {63},
  year    = {1988},
  pages   = {169--178},
  doi     = {10.7146/math.scand.a-12231},
  note    = {\doi{10.7146/math.scand.a-12231}}
}

@article{Frohmader2008,
  author  = {Frohmader, A.},
  title   = {Face vectors of flag complexes},
  journal = {Israel J. Math.},
  volume  = {164},
  year    = {2008},
  pages   = {153--164},
  doi     = {10.1007/s11856-008-0024-3},
  note    = {\doi{10.1007/s11856-008-0024-3}}
}

@article{LehnerWagner2017,
  author  = {Lehner, F. and Wagner, S.},
  title   = {Maximizing the number of independent sets of fixed size in
             connected graphs with given independence number},
  journal = {Graphs Combin.},
  volume  = {33},
  year    = {2017},
  pages   = {1103--1118},
  doi     = {10.1007/s00373-017-1825-0},
  note    = {\doi{10.1007/s00373-017-1825-0}}
}

\end{document}